\def\({\bg(}
\def\){\bg)}
\def\ord{{\rm ord}}
\def\sgn{{\rm sgn}}
\def\Gal{{\rm Gal}}
\def\v{{\bm v}}
\def\x{{\bm x}}
\def\alg{{\rm alg}}
\def\diag{{\rm diag}}
\def\sgn{{\rm sign}}
\def\Re{{\rm Re}}
\def\B{{\rm B}}
\def\pmod #1{\ ({\rm{mod}}\ #1)}
\def\mod #1{\ {\rm mod}\ #1}
\def\Ack{\medskip\noindent {\bf Acknowledgments}}
\theoremstyle{plain}
\newtheorem{theorem}{Theorem}[section]
\newtheorem{lemma}{Lemma}
\newtheorem{corollary}{Corollary}
\theoremstyle{definition}
\theoremstyle{remark}
\newtheorem{remark}{Remark}
\begin{document}
	\medskip
	\hbox{\it accepted for publication in Finite Fields and Their Applications}
	\title[H.-L. Wu and L.-Y. Wang]
	{The Gross-Koblitz formula and almost circulant matrices related to Jacobi sums}
	\author[H.-L. Wu and L.-Y. Wang]{Hai-Liang Wu and Li-Yuan Wang}
	
	\address {(Hai-Liang Wu) School of Science, Nanjing University of Posts and Telecommunications, Nanjing 210023, People's Republic of China}
	\email{\tt whl@njupt.edu.cn}
	
	\address {(Li-Yuan Wang) School of Physical and Mathematical Sciences, Nanjing Tech University, Nanjing 211816, People's Republic of China}
	\email{\tt lywang@njtech.edu.cn}
	
	\keywords{Jacobi sums, Gauss Sums, The Gross-Koblitz Formula, Cyclotomic Matrices.
		\newline \indent 2020 {\it Mathematics Subject Classification}. Primary 11L05, 15A15; Secondary 11R18, 12E20.
		\newline \indent This work was supported by the Natural Science Foundation of China (Grant Nos. 12101321 and 12201291).}
	
	\begin{abstract}
		In this paper, we mainly consider arithmetic properties of the cyclotomic matrix $B_p(k)=\left[J_p(\chi^{ki},\chi^{kj})^{-1}\right]_{1\le i,j\le (p-1-k)/k}$, where $p$ is an odd prime, $1\le k<p-1$ is a divisor of $p-1$, $\chi$ is a generator of the group of all multiplicative characters of the finite field $\mathbb{F}_p$ and $J_p(\chi^{ki},\chi^{kj})$ is Jacobi sum over $\mathbb{F}_p$. By using the Gross-Koblitz formula and some $p$-adic tools, we first prove that 
		$$p^{n-2}\det B_p(k)\equiv (-1)^{\frac{(n-1)(p+n-3)}{2}}	\left(\frac{1}{k!}\right)^{n-2}\frac{1}{(2k)!}\pmod {p},$$
		where $p-1=kn$. By establishing some theories on almost circulant matrices, we show that 
		$$\det B_p(k)=(-1)^{\frac{(n-1)(p+n-1)}{2}}p^{-(n-1)}n^{n-2}a_p(k).$$
		 Here $a_p(k)$ is the coefficient of $t$ in the minimal polynomial of $\sum_{y\in U_k}(e^{2\pi{\bf i}y/p}-1)$, where $U_k$ is the set of all $k$-th roots of unity over $\mathbb{F}_p$. Also, for $k=1,2$ we obtain explicit expressions of $\det B_p(k)$. 
	\end{abstract}
	\maketitle
	
	\tableofcontents

	\section{Introduction}
	\setcounter{lemma}{0}
	\setcounter{theorem}{0}
	\setcounter{equation}{0}
	\setcounter{conjecture}{0}
	\setcounter{remark}{0}
	\setcounter{corollary}{0}
	
	\subsection{Notation}
	
	Throughout this paper, let $p$ be an odd prime and let $\mathbb{Q}_p$ be the $p$-adic number field. Let $\mathbb{Z}_p$ be the ring of all $p$-adic integers over $\mathbb{Q}_p$ and let $\mathbb{Z}_p^{\times}$ be the group of all $p$-adic units over $\mathbb{Q}_p$. The symbol  $\mathbb{Q}_p^{\alg}$ denotes an algebraic closure of $\mathbb{Q}_p$ and $\mathbb{C}_p$ denotes the completion of $\mathbb{Q}_p^{\alg}$. Also, let 
	$$\ord_p:\ \mathbb{C}_p\rightarrow \mathbb{R}$$
	be the extension of the $p$-adic order function over $\mathbb{Q}_p$. 
	
	Let $\mathbb{F}_p$ be the finite field with $p$ elements. Let $\mathbb{F}_p^{\times}=\mathbb{F}_p\setminus\{0\}$ be the multiplicative group of all nonzero elements of $\mathbb{F}_p$, and let $\widehat{\mathbb{F}_p^{\times}}$ be the cyclic group of all multiplicative characters of $\mathbb{F}_p$. For any multiplicative character $\psi: \mathbb{F}_p^{\times}\rightarrow\mathbb{C}$ (or $\mathbb{C}_p$), we extend $\psi$ to $\mathbb{F}_p$ by defining $\psi(0)=0$. The trivial character is denoted by $\varepsilon$. 
	
	Let $\zeta_p\in\mathbb{C}$ (or $\mathbb{C}_p$) be a primitive $p$-th root of unity. Then the Gauss sum of $\psi$ is defined by 
	\begin{equation*}
		G_p(\psi):=\sum_{x\in\mathbb{F}_p}\psi(x)\zeta_p^x.
	\end{equation*}
	Also, for any $A,B\in\widehat{\mathbb{F}_p^{\times}}$, the Jacobi sum of $A,B$ is defined by 
	\begin{equation*}
		J_p(A,B)=\sum_{x\in\mathbb{F}_p}A(x)B(1-x).
	\end{equation*}
	
	In addition, given an $n\times n$ matrix $M$ over a commutative ring, we use $\det M$ to denote the determinant of $M$. 
	
	\subsection{Background}
	
	Determining the explicit values of Gauss sums and Jacobi sums is a classical topic in number theory. Readers may refer to \cite{BEK} for the history on this topic. In 1811, Gauss first obtained the explicit values of quadratic Gauss sums over $\mathbb{F}_p$, that is, 
	\begin{equation*}
		\sum_{x\in\mathbb{F}_p}\left(\frac{x}{p}\right)e^{2\pi{\bf i}x/p}=\sqrt{(-1)^{\frac{p-1}{2}}p},
	\end{equation*}
	where $(\frac{\cdot}{p})$ is the Legendre symbol and ${\bf i}\in\mathbb{C}$ is a primitive $4$-th root of unity with argument $\pi/2$. 
	
	In 1978 Loxton \cite{Loxton} posed a conjecture on the exact values of quartic Gauss sums over $\mathbb{F}_p$. In fact, suppose $p\equiv 1\pmod 4$. Then there exist a unique integer $a$ and an integer $b$ such that 
	$$p=a^2+b^2$$ 
	with $a\equiv -1\pmod 4$ and $p-5-2b\equiv 4\pmod 8$. Let $L=\mathbb{Q}(e^{2\pi{\bf i}/p},e^{2\pi{\bf i}/(p-1)})$ and let $\mathcal{O}_L$ be the ring of all algebraic integers over $L$. Let $\mathfrak{p}$ be a prime ideal of $\mathcal{O}_L$ with $p\in\mathfrak{p}$. Then it is easy to see that 
	$$\mathcal{O}_L/\mathfrak{p}\cong\mathbb{F}_p.$$
	Let $\omega_{\mathfrak{p}}\in\widehat{\mathbb{F}_p^{\times}}$ be the Teich\"{u}muller character of $\mathfrak{p}$, i.e., 
	$$\omega_{\mathfrak{p}}(x)\equiv x\pmod {\mathfrak{p}}$$
	for any $x\in\mathcal{O}_L$. Also, it is known that $\omega_{\mathfrak{p}}$ is a generator of $\widehat{\mathbb{F}_p^{\times}}$. Let $\chi_{\mathfrak{p}}=\omega_{\mathfrak{p}}^{-(p-1)/4}$ be a character of order $4$. Then Loxton \cite{Loxton} conjectured that the quartic Gauss sum
	\begin{equation*}
		\sum_{x\in\mathbb{F}_p}\chi_{\mathfrak{p}}(x)e^{2\pi{\bf i}x/p}
		=c_p\left(\frac{|b|}{|a|}\right)(-1)^{(b^2+2b)/8}p^{1/4}(a+b{\bf i})^{1/2},
	\end{equation*}
	where $\Re(a+b{\bf i})^{1/2}>0$, $(\frac{\cdot}{|a|})$ is the Jacobi symbol and $c_p\in\{\pm 1\}$ is determined by the congruence 
	\begin{equation*}
		c_p\equiv \frac{|b|}{a}\left(\frac{p-1}{2}\right)!\left(\frac{2}{p}\right)\pmod p.		
	\end{equation*}
	One year later, Matthews \cite{Matthews} confirmed Loxton’s challenging  conjecture. Hence almost $175$ years after Gauss’s determination of quadratic Gauss sums, an elegant formula for quartic Gauss sum has been found. 
	
	In 1979 Gross and Koblitz \cite{Gross-Koblitz} obtained the well-known Gross-Koblitz formula which reveals the relations between the explict values of Gauss sums and the $p$-adic $\Gamma$ function (we will give a detailed introduction in Section 2). 
	
	We now turn to the cyclotomic matrices. Lehmer \cite{Lehmer} and Carlitz \cite{Carlitz} initiated the study of cyclotomic matrices. For example, Carlitz considered the matrix 
	\begin{equation*}
		C_p=\left[\psi(i+j)\right]_{1\le i,j\le p-1},
	\end{equation*}
	where $\psi\in\widehat{\mathbb{F}_p^{\times}}$ is not a trivial character. By using the theory of circulant matrices and character sums over $\mathbb{F}_p$, Carlitz \cite[Theorem 5]{Carlitz} proved that 
	\begin{equation*}
		\det C_p=\begin{cases}
			\frac{1}{p}(-1)^{\frac{p-1}{2f}}G_p(\psi)^{p-1} & \mbox{if}\ f\equiv 1\pmod 2,\\
			\frac{1}{p}(-1)^{\frac{p-1}{f}}\delta(\psi)^{p-1}G_p(\psi)^{p-1} & \mbox{if}\ f\equiv 0\pmod 2,
		\end{cases}
	\end{equation*}
	where $f=\min\{k\in\mathbb{Z}^+: \psi^k=\varepsilon\}$ is the order of $\psi$ and 
	\begin{equation*}
		\delta(\psi)=\begin{cases}
			1         & \mbox{if}\ \psi(-1)=1,\\
			-{\bf i}  & \mbox{if}\  \psi(-1)=-1.
		\end{cases}
	\end{equation*}
	This indicates a close relationship between this type of matrices and Gauss sums.
	
	Along this line, Chapman \cite{Chapman} and Vsemirnov \cite{Vsemirnov12,Vsemirnov13} further investigated some interesting variants of Carlitz's matrix $C_p$. The most well-known result among these variants is Vsemirnov's ingenious proof of Chapman's ``evil determinant conjecture". This challenging conjecture states that 
	\begin{equation*}
		\det\left[\left(\frac{j-i}{p}\right)\right]_{1\le i,j\le \frac{p+1}{2}}=
		\begin{cases}
			-a_p & \mbox{if}\ p\equiv 1\pmod 4,\\
			1     & \mbox{if}\ p\equiv 3\pmod 4.
		\end{cases}
	\end{equation*}
	The rational number $a_p$ is defined by 
	$$\epsilon_p^{(2-(\frac{2}{p}))h_p}=a_p+b_p\sqrt{p}\ (a_p,b_p\in\mathbb{Q}),$$ 
	where $\epsilon_p>1$ and $h_p$ are the fundamental unit and the class number of $\mathbb{Q}(\sqrt{p})$ respectively. 
	
	In 2019 Sun \cite{Sun19} and Krachun and his collaborators \cite{Krachun} considered the cyclotomic matrices involving quadratic polynomials over $\mathbb{F}_p$. For example, for $c,d\in\mathbb{F}_p$ with $d\neq 0$ and $c^2-4d\neq 0$, Sun \cite[Theorem 1.3]{Sun19} defined the matrix 
	\begin{equation*}
		(c,d)_p=\left[\left(\frac{i^2+cij+dj^2}{p}\right)\right]_{1\le i,j\le p-1},
	\end{equation*}
	and proved that $(c,d)_p$ is singular whenever $(\frac{d}{p})=-1$. For the case $(\frac{d}{p})=1$, it is easy to verify that 
	\begin{equation*}
		\det (c,d)_p=\left(\frac{\sqrt{d}}{p}\right)\cdot \det (g,1)_p,
	\end{equation*}
	where $\sqrt{d}\in\mathbb{F}_p$ with $(\sqrt{d})^2=d$ and $g=c/\sqrt{d}$. For $g\neq \pm 2$, Wu \cite{Wu21} proved that $(g,1)_p$ is a singular matrix whenever the curve defined by the equation 
	\begin{equation*}
		y^2=x(x^2+gx+1)
	\end{equation*}
	is a supersingular elliptic curve over $\mathbb{F}_p$. 
	
	\subsection{Motivation} After introducing the above relevant results, we now describe our research motivations. For any complex numbers $x,y$ with $\Re(x)>0$ and $\Re(y)>0$, the Gamma function is defined by 
	\begin{equation*}
		\Gamma(x)=\int_{0}^{\infty}t^{x-1}e^{-t}dt.
	\end{equation*}
	The Beta function is defined by 
	\begin{equation*}
		\B(x,y)=\int_{0}^{1}t^{x-1}(1-t)^{y-1}dt.
	\end{equation*}
	Then it is well-known that the function 
	\begin{equation*}
		G_p: \widehat{\mathbb{F}_p^{\times}}\rightarrow\mathbb{C}\ (\text{or}\ \mathbb{C}_p)
	\end{equation*}
	by sending $\psi$ to the Gauss sum $G_p(\psi)$ is a finite field analogue of the Gamma function. Also, the function 
	\begin{equation*}
		J_p: \widehat{\mathbb{F}_p^{\times}}\times\widehat{\mathbb{F}_p^{\times}}\rightarrow \mathbb{C}\ (\text{or}\ \mathbb{C}_p)
	\end{equation*}
	by sending $(A,B)$ to the Jacobi sum $J_p(A,B)$ is a finite field analogue of the Beta function. Readers may refer to the survey paper \cite{F} for details on this topic. 
	
	Let $n\in\mathbb{Z}^+$. Then it is known that (see \cite[(4.5) and (4.8)]{Normand})
	\begin{equation}\label{Eq. example 1 for det of Gamma}
		\det\left[\Gamma(i+j)\right]_{1\le i,j\le n}=\prod_{r=0}^{n-1}r!(r+1)!,
	\end{equation}
	and that 
	\begin{equation}\label{Eq. example 2 for det of Gamma}
		\det\left[\frac{1}{\Gamma(i+j)}\right]_{1\le i,j\le n}=(-1)^{\frac{n(n-1)}{2}}\prod_{r=0}^{n-1}\frac{r!}{(n+r)!}.
	\end{equation}
	Letting $\chi$ be a generator of $\widehat{\mathbb{F}_p^{\times}}$, as finite field analogues of (\ref{Eq. example 1 for det of Gamma}) and (\ref{Eq. example 2 for det of Gamma}), Wu, Li, Wang and Yip \cite{WLWY} proved that 
	\begin{equation}\label{Eq. finite field analogue 1 for Gamma}
		\det\left[G_p(\chi^{i+j})\right]_{0\le i,j\le p-2}=(-1)^{\frac{p-3}{2}}(p-1)^{p-1},
	\end{equation}
	and that 
	\begin{equation}\label{Eq. finite field analogue 2 for Gamma}
		\det\left[G_p(\chi^{i+j})^{-1}\right]_{0\le i,j\le p-2}=\frac{(-1)^{\frac{p(p+1)}{2}}(p-1)^{p-1}}{p^{p-2}}.
	\end{equation}
	Also, for the Beta function, it is known that 
	\begin{equation*}
		\B(i,j)=\frac{\Gamma(i)\Gamma(j)}{\Gamma(i+j)}.
	\end{equation*}
	By this and \cite[(4.8)]{Normand} one can verify that 
	\begin{equation}\label{Eq. example 1 for det of Beta}
		\det\left[\B(i,j)\right]_{1\le i,j\le n}=(-1)^{\frac{n(n-1)}{2}}\prod_{r=0}^{n-1}\frac{(r!)^3}{(n+r)!},
	\end{equation}
	and that 
	\begin{equation}\label{Eq. example 2 for det of Beta}
		\det\left[\frac{1}{\B(i,j)}\right]_{1\le i,j\le n}=n!.
	\end{equation}
	For the finite field analogue of (\ref{Eq. example 1 for det of Beta}), Wu, Wang and Pan \cite{WWP} proved that 
	\begin{equation}\label{Eq. finite field analogue 1 for Beta}
		\det\left[J_p(\chi^i,\chi^j)\right]_{1\le i,j\le p-2}=(p-1)^{p-3}.
	\end{equation}
	
	Motivated by the above results, in this paper, we focus on the finite field analogue of (\ref{Eq. example 2 for det of Beta}). In general, let $1\le k<p-1$ be a divisor of $p-1$ and let $\chi$ be a generator of $\widehat{\mathbb{F}_p^{\times}}$, we define the matrix 
	\begin{equation}\label{Eq. definition of Bp(k)}
		B_p(k)=\left[\frac{1}{J_p(\chi^{ki},\chi^{kj})}\right]_{1\le i,j\le (p-1-k)/k},
	\end{equation}
	which involves all nontrivial $k$-th multiplicative characters of $\mathbb{F}_p$. We will see later that $B_p(k)$ is much more complicated than the previous matrices defined in (\ref{Eq. finite field analogue 1 for Gamma}),(\ref{Eq. finite field analogue 2 for Gamma}) and (\ref{Eq. finite field analogue 1 for Beta}). Also, the methods used in this paper are completely different from before. 
	
	\subsection{Main Theorems} We now state our main results of this paper. We first consider some local properties of $\det B_p(k)$.
	
	\begin{theorem}\label{Thm. local properties of Bp(k)}
		Let $p$ be an odd prime and let $\chi$ be a generator of $\widehat{\mathbb{F}_p^{\times}}$. Let $1\le k<p-1$ be a divisor of $p-1$. Then the following results hold.
		
		{\rm (i)} $\det B_p(k)\in\mathbb{Q}$ and is independent of the choice of the generator $\chi$. 
		
		{\rm (ii)} Let $p-1=kn$. Then $\ord_p\left(\det B_p(k)\right)=-(n-2)$ and 
		$$p^{n-2}\det B_p(k)\equiv (-1)^{\frac{(n-1)(p+n-3)}{2}}	\left(\frac{1}{k!}\right)^{n-2}\frac{1}{(2k)!}\pmod {p\mathbb{Z}_p}.$$
	\end{theorem}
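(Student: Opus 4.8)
The plan is to exploit the factorization of the Jacobi sum into Gauss sums together with the Gross--Koblitz formula. By part (i) I may fix the generator to be $\chi=\omega^{-1}$, where $\omega$ is the Teichm\"uller character, and choose a root $\pi$ of $x^{p-1}=-p$, so that $\ord_p(\pi)=1/(p-1)$ and the Gross--Koblitz formula reads $G_p(\chi^{a})=G_p(\omega^{-a})=-\pi^{a}\Gamma_p(a/(p-1))$ for $0\le a\le p-2$, with $\Gamma_p(a/(p-1))\in\Z_p^{\times}$. Writing $D=\diag(G_p(\chi^{k})^{-1},\dots,G_p(\chi^{k(n-1)})^{-1})$, I would first verify that $B_p(k)=D\,\tilde H\,D$, where $\tilde H$ is the $(n-1)\times(n-1)$ symmetric matrix with $\tilde H_{ij}=G_p(\chi^{k(i+j)})$ when $i+j\neq n$ and $\tilde H_{ij}=-p$ when $i+j=n$. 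The off-antidiagonal identity is just $J_p(A,B)=G_p(A)G_p(B)/G_p(AB)$ for nontrivial $A,B,AB$, while on the antidiagonal $\chi^{k(i+j)}=\ve$ and the correct value comes from $J_p(A,A^{-1})=-A(-1)$ together with $G_p(A)G_p(A^{-1})=A(-1)p$. This is the key point: the naive Hankel entry $G_p(\ve)=-1$ would give the wrong valuation, whereas the true entry is $-p$. Consequently $\det B_p(k)=(\det D)^2\det\tilde H$.

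For the valuation I would note that Gross--Koblitz gives $\ord_p(G_p(\chi^{ka}))=r(a)/n$ with $r(a)=a\bmod n$, so $\ord_p((\det D)^2)=-2\sum_{i=1}^{n-1}i/n=-(n-1)$. Setting $\varpi=\pi^{k}$ (so $\varpi^{n}=-p$ and $\ord_p(\varpi)=1/n$), each entry of $\tilde H$ equals a unit times $\varpi^{w_{ij}}$, where $w_{ij}=((i+j-1)\bmod n)+1\in\{1,\dots,n\}$, the antidiagonal carrying weight $n$. Expanding $\det\tilde H=\sum_\sigma\sgn(\sigma)\prod_i\tilde H_{i\sigma(i)}$, the term for $\sigma$ has $p$-adic order $\tfrac1n\sum_i w_{i\sigma(i)}$. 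I would then solve the resulting minimum-weight assignment: since $\sum_i(i+\sigma(i))=n(n-1)$ is independent of $\sigma$, a short count bounds the number of indices with $i+\sigma(i)\ge n+1$ by $n-2$ and forces $\sum_i w_{i\sigma(i)}\ge n$, with equality only for the single permutation $\sigma_0$ fixing $1$ and sending $i\mapsto n+1-i$ for $2\le i\le n-1$. Uniqueness of the minimizer rules out cancellation, so $\ord_p(\det\tilde H)=1$ and hence $\ord_p(\det B_p(k))=-(n-1)+1=-(n-2)$.

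For the residue I would reduce every unit modulo $\pi$ via $\Gamma_p(a/(p-1))\equiv 1/a!\pmod{\pi}$ for $0\le a\le p-1$ (which follows from $a/(p-1)\equiv -a\pmod{p}$ and $\Gamma_p(-a)=1/a!$, itself a consequence of the reflection formula and $\Gamma_p(a+1)=(-1)^{a+1}a!$). The unique minimal term of $\det\tilde H$ is $\sgn(\sigma_0)\,G_p(\chi^{2k})G_p(\chi^{k})^{n-2}$ with $\sgn(\sigma_0)=(-1)^{(n-2)(n-3)/2}$; reducing and using $\varpi^{n}=-p$ turns it into $(-1)^{(n-2)(n-3)/2+n}\,p/((2k)!\,(k!)^{n-2})$ modulo higher order. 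Likewise $(\det D)^2\equiv(\prod_{i=1}^{n-1}(ki)!)^2/(-p)^{n-1}$, and the Wilson-type pairing $(ki)!\,(k(n-i))!\equiv(-1)^{ki+1}\pmod p$ (from $\binom{p-1}{ki}\equiv(-1)^{ki}$, using $p-1-ki=k(n-i)$) gives $(\prod_{i=1}^{n-1}(ki)!)^2\equiv(-1)^{(n-1)(p+1)/2}\pmod p$. Multiplying the two contributions, the powers of $p$ combine to $p^{-(n-2)}$, and the parity check $\tfrac{(n-1)(p+1)}2+\tfrac{(n-2)(n-3)}2+1\equiv\tfrac{(n-1)(p+n-3)}2\pmod 2$ delivers the claimed congruence.

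The main obstacle is the second step: recognizing that the antidiagonal correction replaces $-1$ by $-p$, thereby raising $\ord_p$ from $-(n-1)$ to $-(n-2)$, and then proving that the lowest-order term of the $\pi$-adic expansion of $\det\tilde H$ comes from exactly one permutation, so that its leading coefficient survives; the sign bookkeeping in the last step is routine but delicate. Finally, the degenerate case $n=2$ (where $2k=p-1$ and $B_p(k)$ is $1\times1$) must be handled directly by evaluating the single entry, which agrees with the stated formula through $G_p(\chi^{k})^2=\chi^{k}(-1)p$ and Wilson's theorem.
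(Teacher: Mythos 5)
You never prove part (i), yet your argument opens with ``By part (i) I may fix the generator to be $\chi=\omega^{-1}$''. Since part (i) is itself half of the theorem, this is a genuine gap rather than a presentational one: what you actually establish is the valuation and residue of $\det B_p(k)$ for the single generator $\chi=\omega^{-1}$ (for which the entries lie in $\mathbb{Q}_p$, so that $\ord_p$ and reduction make sense), and nothing in your write-up shows $\det B_p(k)\in\mathbb{Q}$, nor that the value is the same for every generator --- which is exactly what is needed to assert the congruence modulo $p\mathbb{Z}_p$ for an arbitrary $\chi$, whose Jacobi sums a priori live only in $\mathbb{Q}(\zeta_{p-1})$. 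The missing step is short but is a distinct idea: $\det B_p(k)\in\mathbb{Q}(\zeta_{p-1})$, and any $\rho_s\in\Gal\left(\mathbb{Q}(\zeta_{p-1})/\mathbb{Q}\right)$ sends $\chi^{ki}\mapsto\chi^{ksi}$, hence permutes the rows and the columns of $B_p(k)$ by one and the same permutation $\sigma_s$, so $\rho_s\left(\det B_p(k)\right)=\sgn(\sigma_s)^2\det B_p(k)=\det B_p(k)$; Galois theory then gives $\det B_p(k)\in\mathbb{Q}$, and the identical computation gives independence of the generator. You must supply this (or an equivalent argument) before your reduction to $\chi=\omega^{-1}$ is legitimate.

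Granting part (i), your proof of part (ii) is correct, and it is a genuine variant of the paper's. Both rest on the same pillars: the factorization $J_p(A,B)=G_p(A)G_p(B)/G_p(AB)$ together with the crucial replacement of the naive entry $-1$ by $-p$ where the product character is trivial, the Gross--Koblitz formula, the congruence $\Gamma_p(-a)\equiv 1/a!$, and Wilson-type identities. The paper, however, first reindexes columns by $j\mapsto -j$ (sign given by Lerch's theorem), turning $B_p(k)$ into a circulant-type matrix $D_p(k)$ with $G_p(\chi^{k(i-j)})$ off the diagonal and $-p$ on it; it then extracts powers of $\pi$ from the rows and reduces the resulting matrix modulo $\pi$, where only a single permutation pattern survives. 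You keep the symmetric Hankel form $B_p(k)=D\widetilde{H}D$ and instead run a minimum-weight analysis on the Leibniz expansion of $\det\widetilde{H}$; your combinatorial lemma (indices $1$ and $\sigma^{-1}(1)$ can never satisfy $i+\sigma(i)>n$, so at most $n-2$ indices can, with equality forcing $\sigma(1)=1$ and then $\sigma(i)=n+1-i$ by a sum argument) is a correct substitute for the paper's mod-$\pi$ collapse, and it makes the ``no cancellation'' point explicit since every other term has $\ord_p\ge 2$. The sign bookkeeping --- the reversal sign $(-1)^{(n-2)(n-3)/2}$, the Wilson pairing $(ki)!\,(k(n-i))!\equiv(-1)^{ki+1}\pmod p$, and the final parity identity --- checks out, as does your separate treatment of the $1\times 1$ case $n=2$, which the general argument indeed does not cover because there $\widetilde{H}_{11}=-p$ rather than $G_p(\chi^{2k})$.
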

	
	We next consider the explicit value of $\det B_p(k)$. Using the above notations, let 
	$$U_k=\{y\in\mathbb{F}_p: y^k=1\}$$
	be the subgroup of all $k$-th roots of unity over $\mathbb{F}_p$, and let 
	$$\theta_k=\left(-k+\sum_{y\in U_k}\zeta_p^y\right)=\sum_{y\in U_k}\left(\zeta_p^y-1\right).$$
	Also, we let $P_k(t)$ be the minimal polynomial of $\theta_k$ over $\mathbb{Q}$. Adopting the above notations, we have the following theorem.
	
	\begin{theorem}\label{Thm. global properties of Bp(k)}
		Let $p$ be an odd prime and let $\chi$ be a generator of $\widehat{\mathbb{F}_p^{\times}}$. Let $1\le k<p-1$ be a divisor of $p-1$ and write $p-1=kn$. Then the following results hold.
		
		{\rm (i)}  Suppose that $a_p(k)$ is the coefficient of $t$ in the polynomial $P_k(t)$. Then 
		$$\det B_p(k)=(-1)^{\frac{(n-1)(p+n-1)}{2}}p^{-(n-1)}n^{n-2}a_p(k).$$
		
		{\rm (ii)} For $k=1$, we have 
		$$\det B_p(1)=\frac{1}{2}(p-1)^{p-2}p^{-(p-3)}.$$
		
		{\rm (iii)} Suppose $p\ge5$. Then 
		$$ \det B_p(2)=\frac{(-1)^{\frac{p+1}{2}+\lfloor\frac{p-3}{4}\rfloor}}{24}\left(\frac{p-1}{2}\right)^{\frac{p-5}{2}}(p^2-1)p^{-(p-5)/2},$$
		where $\lfloor\cdot\rfloor$ is the floor function. 
	\end{theorem}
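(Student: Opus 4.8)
The plan is to strip the Jacobi sums down to Gauss sums, recognise the resulting matrix as an \emph{almost retrocirculant}, diagonalise it by a discrete Fourier transform whose eigenvalues are precisely the conjugates $\theta_k^{(0)},\dots,\theta_k^{(n-1)}$ of $\theta_k$, and finally read $\det B_p(k)$ off from a single principal minor. Write $\psi=\chi^k$, a character of order $n$, and recall $J_p(A,B)=G_p(A)G_p(B)/G_p(AB)$ together with $J_p(\psi^i,\psi^{-i})=-\psi^i(-1)$ and $G_p(\psi^i)G_p(\psi^{n-i})=\psi^i(-1)p$. Conjugating by $D=\diag(G_p(\psi^1),\dots,G_p(\psi^{n-1}))$ turns $B_p(k)$ into the matrix $N$ with $N_{ij}=G_p(\psi^{(i+j)\bmod n})$ off the anti-diagonal and $N_{ij}=-p$ on the anti-diagonal $i+j=n$. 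Since $N=D\,B_p(k)\,D$ and Gauss sums of nontrivial characters never vanish, we get $\det B_p(k)=\det N\big/\big(\prod_{i=1}^{n-1}G_p(\psi^i)\big)^2$, and pairing $i\leftrightarrow n-i$ evaluates the denominator as $p^{\,n-1}\psi(-1)^{n(n-1)/2}$. Thus the whole problem reduces to computing $\det N$.

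To handle $N$ I would develop the needed theory of almost circulant matrices. Let $\eta_j=\sum_{y\in g^jU_k}\zeta_p^{y}$ ($0\le j\le n-1$) be the Gaussian periods attached to $U_k$, where $g$ generates $\mathbb{F}_p^{\times}$ and $\psi(g)=\zeta_n$ is a primitive $n$-th root of unity. Because $\psi$ is trivial on $U_k$, grouping $\mathbb{F}_p^{\times}$ into cosets of $U_k$ gives $G_p(\psi^m)=\sum_{j=0}^{n-1}\zeta_n^{mj}\eta_j$; that is, the Gauss sums are the discrete Fourier transform of the period vector. Enlarge $N$ to the full $n\times n$ modified retrocirculant $\tilde R$ by restoring the row and column of index $0$ (placing $-p$ also at the $(0,0)$ entry), so that $N$ is the $(0,0)$-minor of $\tilde R$. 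With $F=[\zeta_n^{ij}]_{0\le i,j\le n-1}$ and the reversal matrix $Q$ (entry $1$ exactly when $i+j\equiv0\pmod n$), the Fourier identity writes the unperturbed retrocirculant as $F\,\diag(\eta_0,\dots,\eta_{n-1})\,F^{T}$, while the perturbation equals $-(p-1)Q=-k\,FF^{T}$ because $F^2=FF^{T}=nQ$ and $p-1=kn$. Hence
\[
\tilde R=F\,\diag(\eta_0-k,\dots,\eta_{n-1}-k)\,F^{T}=F\,\diag\!\big(\theta_k^{(0)},\dots,\theta_k^{(n-1)}\big)\,F^{T},
\]
the key diagonalisation, whose eigenvalues are exactly the conjugates of $\theta_k$.

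Now I extract the minor. Since $N$ is the $(0,0)$-minor, $\det N=(\mathrm{adj}\,\tilde R)_{00}=\det\tilde R\cdot(\tilde R^{-1})_{00}$. Using $F^{-1}=\tfrac1n\overline F$, whose $0$-th column is the constant $\tfrac1n$, one finds $(\tilde R^{-1})_{00}=\tfrac1{n^2}\sum_j(\theta_k^{(j)})^{-1}$ and $\det\tilde R=(\det F)^2\prod_j\theta_k^{(j)}$, so that $\det N=\tfrac{(\det F)^2}{n^2}\,e_{n-1}(\theta_k^{(0)},\dots,\theta_k^{(n-1)})$. But $e_{n-1}$ is, up to the sign $(-1)^{n-1}$, the coefficient of $t$ in $P_k(t)=\prod_j(t-\theta_k^{(j)})$, i.e.\ $e_{n-1}=(-1)^{n-1}a_p(k)$; and $(\det F)^2=\det(nQ)=(-1)^{\lfloor(n-1)/2\rfloor}n^{n}$. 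Combining with the first paragraph yields $\det B_p(k)=s\,n^{\,n-2}p^{-(n-1)}a_p(k)$ for an explicit sign $s\in\{\pm1\}$. I expect the genuine obstacle to be pinning $s$ down to the stated $(-1)^{(n-1)(p+n-1)/2}$: the magnitude falls out cleanly, but the sign must be assembled from three parities---the reversal determinant $(-1)^{\lfloor(n-1)/2\rfloor}$, the factor $(-1)^{n-1}$ from $e_{n-1}$, and $\psi(-1)^{n(n-1)/2}$ with $\psi(-1)=1$ or $-1$ according as $k$ is even or odd---which must be reconciled using $p+n-1=n(k+1)$.

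Finally, parts (ii) and (iii) follow by evaluating $a_p(k)$ for $k=1,2$. For $k=1$, $U_1=\{1\}$ gives $\theta_1=\zeta_p-1$, so $P_1(t)=\Phi_p(t+1)$ and $a_p(1)=\binom p2$; substituting $n=p-1$ into (i), the sign being $+1$ since $(p-2)(p-1)$ is even, yields (ii). For $k=2$, $U_2=\{\pm1\}$ gives $\theta_2=-(1-\zeta_p)(1-\zeta_p^{-1})=2\cos(2\pi/p)-2$, and a direct computation of the $t$-coefficient of $P_2$ (using $\prod_{l=1}^{p-1}(1-\zeta_p^{l})=p$) gives $a_p(2)=p\sum_{j=1}^{(p-1)/2}\big((1-\zeta_p^{j})(1-\zeta_p^{-j})\big)^{-1}=\tfrac{p}{8}\sum_{l=1}^{p-1}\csc^2(\pi l/p)=\tfrac{p(p^2-1)}{24}$, via the classical identity $\sum_{l=1}^{p-1}\csc^2(\pi l/p)=(p^2-1)/3$; substituting $n=(p-1)/2$ into (i) and resolving the sign, which forces the floor term in the exponent, yields (iii).
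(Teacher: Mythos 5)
Your proposal is correct and arrives at exactly the stated formulas, but its execution differs from the paper's at nearly every step, so a comparison is worthwhile. The paper reduces $B_p(k)$ by reindexing columns via $j\mapsto -j$ (paying the sign of that permutation through Lerch's theorem, Lemma \ref{Lem. permutation}) and factoring the Gauss sums $G_p(\chi^{ki})G_p(\chi^{-kj})$ out of rows and columns, producing an almost circulant matrix $D_p(k)$ with entries $G_p(\chi^{k(i-j)})$ and diagonal $-p$; it then proves a general determinant formula for almost circulant matrices (Theorem \ref{Thm. det of an almost circulant matrix}, via a bordering trick plus a continuity argument for the singular case) and quotes Lemma \ref{Lem. eigenvalues of gauss sums matrices} from \cite{WLWY} for the eigenvalues. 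You instead conjugate by $\diag\left(G_p(\psi),\dots,G_p(\psi^{n-1})\right)$, obtaining a retrocirculant $N$ (entries indexed by $i+j$, anti-diagonal $-p$), rederive the spectral decomposition from scratch via Gaussian periods (the identity $G_p(\psi^m)=\sum_j\zeta_n^{mj}\eta_j$ together with $F^2=nQ$), and extract the principal minor by the adjugate identity rather than by a general theorem. Your route is more self-contained: it needs neither Lerch's sign formula nor the external eigenvalue lemma, and it sidesteps the paper's singular-case perturbation because the $\theta_k^{(j)}$ are nonzero. Your treatment of (iii) is also genuinely different: the paper quotes the known minimal polynomial of $\zeta_p+\zeta_p^{-1}$ from \cite{BA}, while you compute $a_p(2)$ directly from $\prod_{l=1}^{p-1}(1-\zeta_p^l)=p$ and the cosecant identity $\sum_{l=1}^{p-1}\csc^2(\pi l/p)=(p^2-1)/3$; both give $a_p(2)=p(p^2-1)/24$.

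Three loose ends should be tightened, all routine. First, your identification $e_{n-1}=(-1)^{n-1}a_p(k)$ presupposes that $P_k(t)=\prod_j\left(t-\theta_k^{(j)}\right)$, i.e.\ that $\theta_k$ has degree exactly $n$ over $\mathbb{Q}$ with the $n$ numbers $\eta_j-k$ as its full set of conjugates; the paper proves this by showing that the stabilizer of $\theta_k$ in $\Gal\left(\mathbb{Q}(\zeta_p)/\mathbb{Q}\right)$ is exactly $U_k$, and your argument needs this step (or an equivalent) as well. Second, writing $\det N=\det\tilde R\cdot(\tilde R^{-1})_{00}$ requires $\tilde R$ to be invertible; this is automatic here, since $P_k$ is irreducible of degree $n\ge 2$ and hence has no rational root, so no $\theta_k^{(j)}$ vanishes, but it needs saying. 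Third, the sign you defer does close: your three parities give the exponent $\lfloor (n-1)/2\rfloor+(n-1)+(p-1)(n-1)/2$, which differs from the stated $(n-1)(p+n-1)/2$ by $(n-1)(n-2)/2-\lfloor(n-1)/2\rfloor$, and writing $n-1=2t$ or $n-1=2t+1$ shows this quantity equals $2t(t-1)$ or $2t^2$, hence is always even; likewise for (iii) one checks directly that $(-1)^{(n-1)(p+n-1)/2}$ with $n=(p-1)/2$ equals $(-1)^{\frac{p+1}{2}+\lfloor\frac{p-3}{4}\rfloor}$.
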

	
	\begin{remark} (i) Note that 
		$$(-1)^{\frac{p+1}{2}+\lfloor\frac{p-3}{4}\rfloor}=\left(\frac{-2}{p}\right).$$
		
		(ii) In Section 5, we will see that $\theta_k$ is indeed a generator of the unique intermediate field $L_n$ of $\mathbb{Q}(\zeta_p)$ with $[L_n:\mathbb{Q}]=n$, that is, $\mathbb{Q}(\theta_k)=L_n$. 
		
		(iii) Although Theorem \ref{Thm. global properties of Bp(k)}(i) states that  $\det B_p(k)=(-1)^{\frac{(n-1)(p+n-1)}{2}}p^{-(n-1)}n^{n-2}a_p(k)$, for an arbitrary divisor $k$ of $p-1$ with $3\le k<p-1$, finding a simple expression of $\det B_p(k)$ like (ii) and (iii) in Theorem \ref{Thm. global properties of Bp(k)} seems difficult. Even for the case of $k=3$, we cannot get a satisfactory expression. 
		The following table lists the values of $a_p(3)$ for $p=7,13,19,31,37,43,61,67$.
		\begin{table}[h]
			\centering
			\begin{tabular}{|c|c|c|c|c|c|c|c|c|}
				\hline
				$p$          & $7$ & $13$   & $19$    & $31$      & $37$       & $43$       & $61$         & $67$\\ \hline
				$a_p(3)$     & $7$ & $143$  & $1862$  & $196850$  & $1818106$  & $15924276$ & $9251997612$ & $74126282073$\\ \hline
			\end{tabular}
		\end{table}
	\end{remark}
	
	As a direct consequence of Theorems \ref{Thm. local properties of Bp(k)}--\ref{Thm. global properties of Bp(k)}, we have the following result.
	
	\begin{corollary}
		Let notations be as above. Then
		$\ord_p\left(a_p(k)\right)=1$ and 
		$$\frac{a_p(k)}{p}\equiv -\left(\frac{1}{(k-1)!}\right)^{n-2}\frac{1}{(2k)!}\pmod {p\mathbb{Z}_p}.$$
	\end{corollary}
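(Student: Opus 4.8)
The plan is to derive the Corollary as a formal consequence of Theorems~\ref{Thm. local properties of Bp(k)} and~\ref{Thm. global properties of Bp(k)}, combined with elementary $p$-adic bookkeeping. I would begin with the exact evaluation from Theorem~\ref{Thm. global properties of Bp(k)}(i),
$$\det B_p(k)=(-1)^{\frac{(n-1)(p+n-1)}{2}}p^{-(n-1)}n^{n-2}a_p(k),$$
and multiply both sides by $p^{n-2}$ to isolate exactly the quantity that Theorem~\ref{Thm. local properties of Bp(k)}(ii) controls:
$$p^{n-2}\det B_p(k)=(-1)^{\frac{(n-1)(p+n-1)}{2}}n^{n-2}\,\frac{a_p(k)}{p}.$$
All of the work then reduces to comparing this with the residue recorded in Theorem~\ref{Thm. local properties of Bp(k)}(ii).

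To obtain $\ord_p(a_p(k))=1$, I would first note that since $p-1=kn$ with $1\le k<p-1$ we have $2\le n\le p-1<p$, so $p\nmid n$ and $n^{n-2}$ is a $p$-adic unit, as is the sign factor. By Theorem~\ref{Thm. local properties of Bp(k)}(ii) the left-hand side $p^{n-2}\det B_p(k)$ has $p$-adic order $0$, hence is a unit. It follows at once that $a_p(k)/p$ is a $p$-adic unit, giving $\ord_p(a_p(k))=1$.

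For the congruence I would equate the displayed expression for $p^{n-2}\det B_p(k)$ with the residue of Theorem~\ref{Thm. local properties of Bp(k)}(ii) modulo $p\mathbb{Z}_p$ and solve for $a_p(k)/p$. The quotient of the two sign factors has exponent $\tfrac{1}{2}(n-1)\bigl[(p+n-3)-(p+n-1)\bigr]=-(n-1)$, so it equals $(-1)^{n-1}$, yielding
$$\frac{a_p(k)}{p}\equiv(-1)^{n-1}\,\frac{1}{n^{n-2}}\left(\frac{1}{k!}\right)^{n-2}\frac{1}{(2k)!}\pmod{p\mathbb{Z}_p}.$$
Here $2k\le p-1$, because the largest proper divisor of the even number $p-1$ is $(p-1)/2$; thus $k$, $(k-1)!$ and $(2k)!$ are all $p$-adic units and every division above is legitimate. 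Writing $k!=k\,(k-1)!$ turns $(1/k!)^{n-2}$ into $k^{-(n-2)}\bigl(1/(k-1)!\bigr)^{n-2}$, and it remains only to simplify the scalar $(-1)^{n-1}(nk)^{-(n-2)}$.

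The decisive step is the observation $nk=p-1\equiv-1\pmod p$, whence $(nk)^{-(n-2)}\equiv(-1)^{-(n-2)}=(-1)^{n}\pmod{p}$ and therefore $(-1)^{n-1}(nk)^{-(n-2)}\equiv(-1)^{2n-1}=-1\pmod p$. This collapses the scalar to $-1$ and produces
$$\frac{a_p(k)}{p}\equiv-\left(\frac{1}{(k-1)!}\right)^{n-2}\frac{1}{(2k)!}\pmod{p\mathbb{Z}_p},$$
as desired. I do not expect any genuine obstacle here: the entire substance of the Corollary is already carried by the two main theorems, and what remains is only the disciplined tracking of the two sign exponents together with the powers of $n$ and $k$.
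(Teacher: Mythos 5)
Your proposal is correct and takes exactly the route the paper intends: the Corollary is stated there as a direct consequence of Theorems \ref{Thm. local properties of Bp(k)} and \ref{Thm. global properties of Bp(k)}, and your computation — isolating $a_p(k)/p$, noting that $n$, $k$, $(k-1)!$ and $(2k)!$ are $p$-adic units since $2k\le p-1$, and collapsing the two sign factors via $nk=p-1\equiv-1\pmod p$ — is precisely the bookkeeping the paper omits. No gaps.
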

	
	\subsection{Outline of This Paper} In Section 2, we will introduce some preliminaries on Gauss sums, Jacobi sums and the $p$-adic $\Gamma$ functions. The proof of Theorem \ref{Thm. local properties of Bp(k)} will be given in Section 3. In Section 4, we will introduce the definition of almost circulant matrices and pose some necessary lemmas. Finally, we will prove Theorem \ref{Thm. global properties of Bp(k)} in Section 5.
	
	\section{Preliminaries on Gauss sums, Jacobi sums and the $p$-adic $\Gamma$-functions.}
	\setcounter{lemma}{0}
	\setcounter{theorem}{0}
	\setcounter{equation}{0}
	\setcounter{conjecture}{0}
	\setcounter{remark}{0}
	\setcounter{corollary}{0}
	
	We begin with the following basic properties on Gauss sums and Jacobi sums.
	\begin{lemma}\label{Lem. basic properties of Gauss and Jacobi sums}
		Let $\psi,\psi_1,\psi_2\in\widehat{\mathbb{F}_p^{\times}}$ be nontrivial characters. Then the following results hold.
		
		{\rm (i)} $G_p(\psi)G_p(\psi^{-1})=p\psi(-1)$ and $J_p(\psi,\psi^{-1})=-\psi(-1).$
		
		{\rm (ii)} If $\psi_1\psi_2\neq\varepsilon$, then  
		$$J_p(\psi_1,\psi_2)=
		\frac{G_p(\psi_1)G_p(\psi_2)}{G_p(\psi_1\psi_2)}.$$
	\end{lemma}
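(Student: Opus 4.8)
The plan is to derive everything from the orthogonality of a nontrivial multiplicative character together with a couple of elementary changes of variables over $\mathbb{F}_p$; no input beyond the definitions of $G_p$ and $J_p$ is needed. Throughout I would repeatedly invoke the two facts that $\sum_{x\in\mathbb{F}_p^{\times}}\psi(x)=0$ whenever $\psi$ is nontrivial, and that $\sum_{y\in\mathbb{F}_p^{\times}}\zeta_p^{cy}=-1$ when $c\not\equiv0\pmod p$ while it equals $p-1$ when $c\equiv0\pmod p$.

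For the first identity in (i), I would expand the product using $\psi^{-1}(y)=\psi(y^{-1})$ to obtain
\[
G_p(\psi)G_p(\psi^{-1})=\sum_{x,y\in\mathbb{F}_p^{\times}}\psi(x/y)\,\zeta_p^{x+y},
\]
then substitute $x=ty$ so that the exponent becomes $y(t+1)$ and sum over $y$ first. The inner sum contributes $p-1$ when $t=-1$ and $-1$ otherwise, which collapses the whole expression to $\psi(-1)(p-1)-\sum_{t\ne-1}\psi(t)$; the remaining character sum equals $-\psi(-1)$ by orthogonality, giving $p\psi(-1)$. For the Jacobi-sum part of (i), I would rewrite $J_p(\psi,\psi^{-1})=\sum_{x\ne0,1}\psi\!\left(\tfrac{x}{1-x}\right)$ and observe that $u=x/(1-x)$ is a bijection from $\mathbb{F}_p\setminus\{0,1\}$ onto $\mathbb{F}_p\setminus\{0,-1\}$; orthogonality then yields $\sum_{u\ne0,-1}\psi(u)=-\psi(-1)$.

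For (ii), I would expand $G_p(\psi_1)G_p(\psi_2)=\sum_{x,y}\psi_1(x)\psi_2(y)\zeta_p^{x+y}$ and collect terms according to the value of $s=x+y$. The $s=0$ contribution vanishes because $\psi_1\psi_2$ is nontrivial, whereas for each fixed $s\ne0$ the substitution $x=su,\ y=s(1-u)$ factors the inner sum as $\psi_1\psi_2(s)\,J_p(\psi_1,\psi_2)$. Summing over $s\ne0$ then reproduces exactly $J_p(\psi_1,\psi_2)\,G_p(\psi_1\psi_2)$, and since $\psi_1\psi_2\ne\varepsilon$ forces $G_p(\psi_1\psi_2)\ne0$, dividing gives the claimed quotient formula.

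There is no genuinely hard step here; the only points requiring care are checking that the two substitutions $x\mapsto x/y$ (with the case $t=-1$ isolated) and $x\mapsto x/(1-x)$ really are bijections on the relevant index sets, and correctly bookkeeping the one or two exceptional terms that orthogonality removes. The nonvanishing of $G_p(\psi_1\psi_2)$, needed to justify the division in (ii), is itself a consequence of the first identity of (i).
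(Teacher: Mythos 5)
Your proof is correct and complete, but it is worth pointing out that the paper itself offers no proof of this lemma at all: it is stated as a collection of classical facts about Gauss and Jacobi sums, implicitly deferring to the standard literature (e.g., Berndt--Evans--Williams, cited as reference [BEK] in the bibliography). Your argument --- expanding the products, changing variables ($x=ty$ with the diagonal $t=-1$ isolated for the first identity of (i), $u=x/(1-x)$ for the second, and $x=su$, $y=s(1-u)$ along the level sets of $x+y$ for (ii)), then finishing with orthogonality of nontrivial characters --- is precisely the standard textbook derivation of these identities, so there is nothing to reconcile between your route and the paper's. Two details you handled that are easy to overlook and worth keeping explicit: the $s=0$ stratum in (ii) vanishes exactly because $\psi_1\psi_2\neq\varepsilon$ (this is where the hypothesis enters), and the division by $G_p(\psi_1\psi_2)$ is legitimate because the first identity of (i), applied to the nontrivial character $\psi_1\psi_2$, gives $G_p(\psi_1\psi_2)\,G_p\left((\psi_1\psi_2)^{-1}\right)=p\,\psi_1\psi_2(-1)\neq 0$, so your observation that the nonvanishing needed in (ii) follows from (i) closes the argument without any appeal to the deeper fact $|G_p(\psi)|=\sqrt{p}$.
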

	
	Recall that $p$ is an odd prime. We next introduce the $p$-adic $\Gamma$ function $\Gamma_p$. For any $n\in\mathbb{Z}^+$, we first define 
	$$\Gamma_p(n)=(-1)^n\prod_{0<k<n, p\nmid k}k\in\mathbb{Z}_p^{\times}.$$
	As $\mathbb{Z}^+$ is dense in $\mathbb{Z}_p$ and $\mathbb{Z}_p^{\times}$ is a closed multiplicative group, the $p$-adic $\Gamma$ function 
	$\Gamma_p: \mathbb{Z}_p \rightarrow \mathbb{Z}_p^{\times}$
	is defined by 
	$$\Gamma_p(x)=\lim_{n\rightarrow x}\Gamma_p(n)$$
	for any $x\in\mathbb{Z}_p$. 
	
	We next list some basic properties of $p$-adic $\Gamma$ function. 
	
	\begin{lemma}\label{Lem. basic properties of p adic gamma}
		Suppose that $p\ge5$ is a prime. Then the following results hold.
		
		{\rm (i)} For any $x\in\mathbb{Z}_p$, we have 
		\begin{equation*}
			\Gamma_p(x+1)=\begin{cases}
				-x\Gamma_p(x)  & \mbox{if}\ x\in\mathbb{Z}_p^{\times},\\
				-\Gamma_p(x)   & \mbox{if}\ x\in p\mathbb{Z}_p.
			\end{cases}
		\end{equation*}
		
		{\rm (ii)} Let $n\in\mathbb{Z}^+$. Then for any $x,y\in\mathbb{Z}_p$ we have 
		$$x\equiv y\pmod{p^n\mathbb{Z}_p}\Rightarrow \Gamma_p(x) \equiv \Gamma_p(y) \pmod{p^n\mathbb{Z}_p}.$$
		
		{\rm (iii)} For any $x\in\mathbb{Z}_p$, write $x=x_0+px_1$ with $x_0\in\{1,2,\cdots,p\}$ and $x_1\in\mathbb{Z}_p$. Then 
		$$\Gamma_p(x)\Gamma_p(1-x)=(-1)^{x_0}.$$
	\end{lemma}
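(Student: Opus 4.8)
The plan is to prove the three parts in order, in each case first establishing the claim for positive integers directly from the defining product $\Gamma_p(n)=(-1)^n\prod_{0<k<n,\,p\nmid k}k$ and then extending to all of $\mathbb{Z}_p$ via density of $\mathbb{Z}^+$ together with the continuity of $\Gamma_p$. For (i), I would compute the ratio $\Gamma_p(n+1)/\Gamma_p(n)$ for $n\in\mathbb{Z}^+$: the product defining $\Gamma_p(n+1)$ runs over $1\le k\le n$ while that for $\Gamma_p(n)$ runs over $1\le k\le n-1$, so the two differ only by the extra factor $n$ (present exactly when $p\nmid n$) and by the sign $(-1)^{n+1}/(-1)^n=-1$. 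This yields $\Gamma_p(n+1)=-n\Gamma_p(n)$ when $p\nmid n$ and $\Gamma_p(n+1)=-\Gamma_p(n)$ when $p\mid n$. To reach arbitrary $x$, I would approximate $x\in\mathbb{Z}_p^{\times}$ by positive integers $n_j\equiv x\pmod{p^j}$ (which are automatically coprime to $p$) and $x\in p\mathbb{Z}_p$ by positive integers $n_j\in p\mathbb{Z}^+$, then let $j\to\infty$ in the integer identities, using continuity of $\Gamma_p$ and of multiplication.

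For (ii), the crux is the integer congruence $\Gamma_p(n+p^N)\equiv\Gamma_p(n)\pmod{p^N}$. Here $\Gamma_p(n+p^N)/\Gamma_p(n)=(-1)^{p^N}\prod_{n\le k<n+p^N,\,p\nmid k}k$; as $k$ runs over $p^N$ consecutive integers with the multiples of $p$ removed, the factors run exactly once through the units of $\mathbb{Z}/p^N\mathbb{Z}$, whose product is $-1$ by the Wilson-type theorem for the cyclic group $(\mathbb{Z}/p^N\mathbb{Z})^{\times}$ (its only element of order $2$ is $-1$). Since $p$ is odd, $(-1)^{p^N}=-1$, so the ratio is $\equiv 1\pmod{p^N}$, and as $\Gamma_p(n)$ is a unit we get $\Gamma_p(n+p^N)\equiv\Gamma_p(n)$. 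Telescoping gives the claim for integers $n\equiv m\pmod{p^N}$, and passing to limits extends it to all $x\equiv y\pmod{p^N\mathbb{Z}_p}$.

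For (iii), I would set $f(x)=\Gamma_p(x)\Gamma_p(1-x)$ and use (i) to track how $f$ changes under $x\mapsto x+1$. Writing $f(x+1)=\Gamma_p(x+1)\Gamma_p(-x)$ and applying (i) both to $\Gamma_p(x+1)$ and to $\Gamma_p(1-x)=\Gamma_p((-x)+1)$, the factors of $x$ cancel, giving $f(x+1)=-f(x)$ when $x\in\mathbb{Z}_p^{\times}$ and $f(x+1)=f(x)$ when $x\in p\mathbb{Z}_p$. The right-hand side $g(x)=(-1)^{x_0}$ transforms identically: if $x_0\in\{1,\dots,p-1\}$ then the representative of $x+1$ is $x_0+1$ so $g$ flips sign, while if $x_0=p$ then the representative of $x+1$ is $1$ and, because $p$ is odd, $g(x+1)=-1=(-1)^p=g(x)$. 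Hence $f/g$ is invariant under translation by $1$. I would then check the single base case $x=1$, where $\Gamma_p(1)=-1$ and $\Gamma_p(0)=1$ (the latter from $\Gamma_p(p^N)\equiv1\pmod{p^N}$, a special case of the computation in (ii)), so $f(1)=-1=(-1)^{1}=g(1)$. Invariance under $+1$ then forces $f=g$ on all of $\mathbb{Z}^+$, and since $f$ is continuous and $g$ is locally constant (it depends only on $x\bmod p$), the identity extends to all of $\mathbb{Z}_p$.

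I expect part (ii) to be the main obstacle, both because it carries the genuine arithmetic content (the Wilson-type evaluation of the product of units modulo $p^N$ and the parity of $(-1)^{p^N}$) and because it is logically foundational: it is precisely the uniform-continuity estimate underpinning the limit definition of $\Gamma_p$, so some care is needed to present it before, and avoid circular appeals to, the continuity invoked in parts (i) and (iii).
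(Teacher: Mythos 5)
The paper offers no proof of this lemma at all: it is quoted in Section 2 as a set of standard properties of Morita's $p$-adic $\Gamma$-function, so there is no argument of the authors' to compare yours against. Your proof is correct and self-contained. The computation of $\Gamma_p(n+1)/\Gamma_p(n)$ for positive integers, the identification of $\Gamma_p(n+p^N)/\Gamma_p(n)$ with $(-1)^{p^N}$ times the product of all units of $\mathbb{Z}/p^N\mathbb{Z}$ (the integers in a window of length $p^N$ prime to $p$ hit each unit class exactly once), the Wilson-type evaluation of that product as $-1$ (valid because $(\mathbb{Z}/p^N\mathbb{Z})^{\times}$ is cyclic for odd $p$, with $-1$ its unique involution), and the passage to limits are all sound; these two facts are precisely where oddness of $p$ enters, and for $p=2$ the claim would genuinely fail since the unit group modulo $2^N$, $N\ge 3$, is not cyclic and the product of its elements is $+1$. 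Part (iii) is also handled correctly: the translation rule for $f(x)=\Gamma_p(x)\Gamma_p(1-x)$ follows from (i) applied at $x$ and at $-x$ (whose unit/non-unit status agree), the right-hand side $(-1)^{x_0}$ obeys the same rule exactly because the representative set is $\{1,\dots,p\}$ and $p$ is odd, and the base case rests on $\Gamma_p(1)=-1$ and $\Gamma_p(0)=1$, the latter correctly extracted from $\Gamma_p(p^N)\equiv 1\pmod{p^N\mathbb{Z}_p}$. Your closing remark about logical order is exactly right and should be made explicit in any write-up: the integer congruence in (ii) is what shows that $(\Gamma_p(n))$ is Cauchy along $n\to x$, i.e., it is the statement that legitimizes the limit definition of $\Gamma_p$ and the continuity invoked in (i) and (iii), so it must come first to avoid circularity.
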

	
	We now state a relationship between the $p$-adic $\Gamma$ function and Gauss sums. Let $\pi\in\mathbb{C}_p$ be a root of the equation $x^{p-1}+p=0$. Let $\zeta_{\pi}\in\mathbb{C}_p$ be the unique primitive $p$-th root of unity such that $\zeta_{\pi}\equiv 1+\pi\pmod{\pi^2}$. Also, since 
	$$\mathbb{Z}_p/p\mathbb{Z}_p\cong\mathbb{Z}/p\mathbb{Z}=\mathbb{F}_p,$$
	the Teich\"{u}muller character $\omega_p: \mathbb{F}_p\rightarrow \mathbb{Z}_p$ is defined by 
	$$\omega_p(x\mod p\mathbb{Z}_p)\equiv x\pmod {p\mathbb{Z}_p}$$
	for any $x\in\mathbb{Z}_p$. It is easy to verify that $\omega_p$ is a generator of $\widehat{\mathbb{F}_p^{\times}}$. The following Gross-Koblitz formula \cite[Theorem 1.7]{Gross-Koblitz} gives the explicit value of the Gauss sum involving $\omega_p$. 
	
	\begin{lemma}[The Gross-Koblitz Formula]\label{Lem. Gross-Koblitz}
		Let notations be as above. Then for $0\le j\le p-2$,
		$$G_p(\omega_p^{-j})=\sum_{x\in\mathbb{F}_p}\omega_p^{-j}(x)\zeta_{\pi}^x=-\pi^j\cdot\Gamma_p\left(\frac{j}{p-1}\right).$$
	\end{lemma}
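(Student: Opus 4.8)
This is the cited Gross–Koblitz theorem, so the plan is to outline Dwork's classical proof, which realizes the Gauss sum as a value of a $p$-adic analytic function and matches it coefficient-by-coefficient against $\Gamma_p$. The starting point is Dwork's splitting function
\[
\theta_{\pi}(X)=\exp\bigl(\pi(X-X^p)\bigr)=\sum_{n\ge 0}\lambda_n X^n .
\]
First I would record its two defining features. Because of the cancellation between the $X$ and $X^p$ terms, the coefficients satisfy $\ord_{\pi}(\lambda_n)\ge n(p-1)/p$, so $\theta_{\pi}$ converges on the closed disc $|X|\le 1$ (indeed on a slightly larger disc). Moreover $\theta_{\pi}$ splits the standard additive character: for every $a\in\mathbb{F}_p$ one has $\zeta_{\pi}^{a}=\theta_{\pi}(\omega_p(a))$, where on the left $a$ is taken as an integer in $\{0,1,\ldots,p-1\}$ and $\omega_p(a)$ is its Teichm\"{u}ller lift. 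The normalization $\theta_{\pi}(1)=\zeta_{\pi}$ is compatible with $\zeta_{\pi}\equiv 1+\pi\pmod{\pi^2}$, i.e. it is exactly Dwork's choice of primitive $p$-th root of unity.

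Next I would collapse the Gauss sum by orthogonality. Substituting the splitting identity and letting $\zeta=\omega_p(a)$ run over $\mu_{p-1}$ as $a$ runs over $\mathbb{F}_p^{\times}$,
\[
G_p(\omega_p^{-j})=\sum_{a\in\mathbb{F}_p^{\times}}\omega_p(a)^{-j}\zeta_{\pi}^{a}
=\sum_{\zeta\in\mu_{p-1}}\zeta^{-j}\sum_{n\ge 0}\lambda_n\zeta^{\,n}.
\]
Since $\sum_{\zeta\in\mu_{p-1}}\zeta^{m}=(p-1)$ when $(p-1)\mid m$ and $0$ otherwise, only the indices $n\equiv j\pmod{p-1}$ survive, giving $G_p(\omega_p^{-j})=(p-1)\sum_{t\ge 0}\lambda_{j+t(p-1)}$. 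Expanding $\exp(\pi X)\exp(-\pi X^p)$ and using $\pi^{p-1}=-p$ then yields the closed form
\[
\lambda_n=\pi^{n}\sum_{b\ge 0,\ pb\le n}\frac{1}{(n-pb)!\,b!\,p^{b}},
\]
so the right-hand side becomes an explicit convergent $p$-adic sum carrying the overall factor $\pi^{j}$.

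The heart of the proof is then the identity $(p-1)\sum_{t\ge 0}\lambda_{j+t(p-1)}=-\pi^{j}\Gamma_p(j/(p-1))$. I would prove it by showing both sides obey the same recursion in $j$ and agree at $j=0$. On the analytic side the coefficients satisfy the recursion from $\theta_{\pi}'=\pi(1-pX^{p-1})\theta_{\pi}$, namely $(n+1)\lambda_{n+1}=\pi\lambda_n-\pi p\,\lambda_{n-p+1}$; on the $\Gamma$-side the functional equation $\Gamma_p(x+1)=-x\Gamma_p(x)$ of Lemma~\ref{Lem. basic properties of p adic gamma}(i) governs how $\Gamma_p(j/(p-1))$ varies with $j$. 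The base case is $G_p(\varepsilon)=-1=-\pi^{0}\Gamma_p(0)$. As a leading-term check, Stickelberger's congruence gives $\ord_{\pi}G_p(\omega_p^{-j})=j$ and $G_p(\omega_p^{-j})/\pi^{j}\equiv -1/j!\pmod{\pi}$, which matches $-\Gamma_p(j/(p-1))\equiv -1/j!\pmod{p}$ after using $j/(p-1)\equiv -j\pmod p$, Lemma~\ref{Lem. basic properties of p adic gamma}(ii), and Wilson's theorem.

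The hard part will be the analytic identification in the previous paragraph: one must control the $p$-adic convergence of the double series for $\lambda_{j+t(p-1)}$ and prove that $(p-1)\sum_{t}\lambda_{j+t(p-1)}$ is governed by exactly the recursion solved by $-\pi^{j}\Gamma_p(j/(p-1))$, which is where Dwork's congruences on the Artin–Hasse coefficients do the real work. A useful independent verification of the normalization is the reflection formula: multiplying the claimed values at $j$ and $p-1-j$ and using $\pi^{p-1}=-p$ together with $\Gamma_p(x)\Gamma_p(1-x)=(-1)^{x_0}$ (Lemma~\ref{Lem. basic properties of p adic gamma}(iii)) reproduces $G_p(\omega_p^{-j})G_p(\omega_p^{\,j})=p\,\omega_p^{-j}(-1)$ from Lemma~\ref{Lem. basic properties of Gauss and Jacobi sums}(i). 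This confirms that the two sides agree not only in valuation but in their unit parts.
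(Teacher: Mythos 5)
The paper does not prove this lemma at all: Lemma~\ref{Lem. Gross-Koblitz} is quoted from Gross and Koblitz \cite{Gross-Koblitz} (Theorem 1.7 there) and used as a black box, so there is no internal proof to compare yours against; your attempt must stand on its own, and as it stands it has a genuine gap. The reduction you set up is the correct classical one: Dwork's splitting function, the identity $\zeta_{\pi}^{a}=\theta_{\pi}(\omega_p(a))$, and orthogonality over $\mu_{p-1}$ do give $G_p(\omega_p^{-j})=(p-1)\sum_{t\ge 0}\lambda_{j+t(p-1)}$, and your consistency checks (Stickelberger's congruence, the reflection formula against Lemma~\ref{Lem. basic properties of Gauss and Jacobi sums}(i)) are sound. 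But the identity $(p-1)\sum_{t\ge 0}\lambda_{j+t(p-1)}=-\pi^{j}\Gamma_p\left(\frac{j}{p-1}\right)$, which you defer as ``the hard part,'' is not a technical detail to be filled in later: it \emph{is} the Gross--Koblitz formula, and everything preceding it is bookkeeping. Worse, the strategy you propose for it --- show both sides satisfy the same recursion in $j$ and agree at $j=0$ --- does not get off the ground as stated. The functional equation $\Gamma_p(x+1)=-x\Gamma_p(x)$ applied at $x=j/(p-1)$ moves the index $j$ to $j+p-1$, not to $j+1$, so it produces no step-one recursion inside the range $0\le j\le p-2$; and on the Gauss-sum side there is no elementary relation between $G_p(\omega_p^{-j})$ and $G_p(\omega_p^{-(j-1)})$ (consecutive Gauss sums are linked through Jacobi sums, which reintroduces exactly the quantity one is trying to evaluate). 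Every known proof closes this gap with substantial extra input --- Washnitzer--Monsky cohomology in the original paper, Dwork's congruences together with the multiplication formula for $\Gamma_p$ and the Hasse--Davenport relation, or A.~Robert's elementary but delicate analysis --- none of which your outline engages.

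A smaller but real error: the coefficient bound you state, $\ord_{\pi}(\lambda_n)\ge n(p-1)/p$, i.e. $\ord_p(\lambda_n)\ge n/p$, is stronger than what is true. The standard (and asymptotically sharp) estimate for Dwork's exponential is $\ord_p(\lambda_n)\ge n(p-1)/p^2$, corresponding to radius of convergence $p^{(p-1)/p^2}$; this is still $>1$, so it suffices for the convergence and interchange of summation you need, but the bound as you wrote it fails for large $n$. Within this paper the honest ``proof'' of the lemma is the citation; if you want a self-contained argument, you should follow one of the complete published routes rather than the recursion sketch.
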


	\section{Proof of Theorem \ref{Thm. local properties of Bp(k)}}
	\setcounter{lemma}{0}
	\setcounter{theorem}{0}
	\setcounter{equation}{0}
	\setcounter{conjecture}{0}
	\setcounter{remark}{0}
	\setcounter{corollary}{0}
	
	Recall that $p$ is an odd prime and that $\chi$ is a generator of $\widehat{\mathbb{F}_p^{\times}}$. For any divisor $k$ of $p-1$ with $1\le k<p-1$, we set $p-1=kn$. Consider the set  
	$$H_k=\{\chi^{kj}:\ 1\le j\le n-1\}$$
	of all nontrivial $k$-th characters. Let $s\in\mathbb{Z}$ with $\gcd(s,p-1)=1$. Then the map $\chi^{kj}\mapsto\chi^{ksj}$ induces a permutation $\sigma_{s}$ of $H_k$. If we use the symbol $\sgn(\sigma_{s})$ to denote the sign of the permutation $\sigma_{s}$, then the following lemma gives the explicit value of $\sgn(\sigma_{s})$.
	
	\begin{lemma}\label{Lem. permutation}
		Let notations be as above. Then 
		\begin{equation*}
			\sgn(\sigma_{s})=
			\begin{cases}
				(\frac{s}{n})  & \mbox{if}\ n\equiv 1\pmod 2,\\
				1              & \mbox{if}\ n\equiv 2\pmod 4,\\
				(-1)^{(s-1)/2} & \mbox{if}\ n\equiv 0\pmod 4,
			\end{cases}
		\end{equation*}
		where $(\frac{\cdot}{n})$ is the Jacobi symbol if $n$ is odd. In particular, we have 
		$$\sgn(\sigma_{-1})=(-1)^{\frac{(n-1)(n-2)}{2}}.$$
	\end{lemma}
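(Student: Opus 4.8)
The plan is to recognize $\sigma_s$ as a disguised multiplication-by-$s$ permutation on $\mathbb{Z}/n\mathbb{Z}$ and to prove the stated formula as an instance of the generalized Zolotarev lemma. Since $\chi$ has order $p-1=kn$, the assignment $j\mapsto\chi^{kj}$ is a bijection from $\{1,2,\ldots,n-1\}$ onto $H_k$, and under it $\sigma_s$ becomes the map $j\mapsto sj\bmod n$; here $\gcd(s,n)=1$ because $\gcd(s,p-1)=1$ and $n\mid p-1$. Adjoining the fixed point $0$ does not change the sign, so $\sgn(\sigma_s)=\sgn(\mu_s)$, where $\mu_s\colon x\mapsto sx$ is viewed as a permutation of the full additive group $\mathbb{Z}/n\mathbb{Z}$. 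It therefore suffices to compute $\sgn(\mu_s)$.

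First I would establish multiplicativity in $n$. Writing $n=\prod_\ell \ell^{e_\ell}$ and applying the Chinese Remainder Theorem, $\mu_s$ becomes simultaneous multiplication by $s$ on $\prod_\ell \mathbb{Z}/\ell^{e_\ell}\mathbb{Z}$, and the standard formula for the sign of a coordinatewise product of permutations gives
\[
\sgn(\mu_s \text{ on } \mathbb{Z}/n)=\prod_\ell \sgn\bigl(\mu_s \text{ on } \mathbb{Z}/\ell^{e_\ell}\bigr)^{\,n/\ell^{e_\ell}}.
\]
This reduces everything to prime powers, and the parity of each exponent $n/\ell^{e_\ell}$ is exactly what produces the trichotomy: when $n$ is odd every exponent is odd, when $n\equiv 2\pmod{4}$ the odd‑prime exponents become even (annihilating those contributions) while the factor $2$ contributes trivially, and when $4\mid n$ only the $2$‑part survives.

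For a single prime power I would split $\mathbb{Z}/\ell^e\mathbb{Z}$ into its units and its multiples of $\ell$. The latter set is $\mu_s$‑equivariantly isomorphic to $\mathbb{Z}/\ell^{e-1}\mathbb{Z}$, so peeling off one layer at a time yields
\[
\sgn\bigl(\mu_s \text{ on } \mathbb{Z}/\ell^e\bigr)=\prod_{i=1}^{e}\sgn\bigl(\mu_s \text{ on } (\mathbb{Z}/\ell^i)^\times\bigr).
\]
On each unit group $\mu_s$ is left‑translation by $s$, whose cycles are the cosets of $\langle s\rangle$; hence $\sgn(\mu_s \text{ on } (\mathbb{Z}/\ell^i)^\times)=(-1)^{N-N/m}$ with $N=\varphi(\ell^i)$ and $m=\ord_{\ell^i}(s)$. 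For odd $\ell$ the group is cyclic of even order, and the identity ``$s$ is a square $\iff N/m$ is even'', together with the fact that squareness modulo $\ell^i$ is detected by $(\tfrac{s}{\ell})$, collapses this factor to $(\tfrac{s}{\ell})$ for every $i$; multiplying over $i$ gives $(\tfrac{s}{\ell})^{e}$, and assembling over $\ell$ recovers the Jacobi symbol $(\tfrac{s}{n})$ in the odd case. For $\ell=2$ the unit groups are $2$‑groups, and a direct order computation shows the $i$‑th factor is trivial for $i=1$ and for all $i\ge 3$ (since $(\mathbb{Z}/2^i)^\times$ is noncyclic with maximal element order $2^{i-2}$), while the $i=2$ factor equals $(-1)^{(s-1)/2}$; this pins down the $4\mid n$ case and, through the parity discussion above, the $n\equiv 2\pmod{4}$ case as well.

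The main obstacle I anticipate is the $2$‑adic bookkeeping: one must identify exactly which unit‑group layer contributes a sign, confirm that the answer legitimately depends only on $s\bmod 4$ (which is valid precisely because $4\mid n$ in that branch), and verify that the noncyclic structure of $(\mathbb{Z}/2^i)^\times$ for $i\ge 3$ forces those layers to be trivial. Once all three cases are in hand, the final ``in particular'' claim follows by substituting $s=-1$: for odd $n$ one has $(\tfrac{-1}{n})=(-1)^{(n-1)/2}$, while for even $n$ the displayed value specializes directly, and a short parity check shows that each branch equals $(-1)^{(n-1)(n-2)/2}$, merging the trichotomy into the single stated exponent.
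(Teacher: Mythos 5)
Your proposal is correct, and it diverges from the paper in one essential respect: the paper's proof is a two-line reduction plus a citation. The authors observe, exactly as you do, that $\sigma_s$ is conjugate to the multiplication map $x\mapsto sx$ on $\mathbb{Z}/n\mathbb{Z}\setminus\{0\bmod n\mathbb{Z}\}$, and then simply quote Lerch's 1896 theorem (the generalized Zolotarev lemma) for the sign of that map; the trichotomy in the statement is Lerch's formula verbatim. What you do instead is reprove Lerch's theorem from scratch: CRT to reduce to prime powers with the coordinatewise sign formula $\sgn(\mu_s)=\prod_\ell \sgn(\mu_s\ \text{on}\ \mathbb{Z}/\ell^{e_\ell})^{n/\ell^{e_\ell}}$, the equivariant filtration of $\mathbb{Z}/\ell^{e}\mathbb{Z}$ by unit-group layers, and the cycle-counting identity $\sgn=(-1)^{N-N/m}$ for translation by $s$ on a group of order $N$ in which $s$ has order $m$. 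I checked the key steps: for odd $\ell$ the cyclic-group argument ($N/m$ even $\iff$ $s$ is a square mod $\ell^i$ $\iff(\tfrac{s}{\ell})=1$) does collapse each layer to $(\tfrac{s}{\ell})$; for $\ell=2$ the layer $i=2$ gives $(-1)^{(s-1)/2}$ while layers $i\ge 3$ are trivial because $(\mathbb{Z}/2^i\mathbb{Z})^\times$ has exponent $2^{i-2}$, forcing $N/m$ even; and the parity of the exponents $n/\ell^{e_\ell}$ produces exactly the stated trichotomy, including the well-definedness of $(-1)^{(s-1)/2}$ when $4\mid n$. The specialization $s=-1$ to $(-1)^{(n-1)(n-2)/2}$ also checks out in all three branches. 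The trade-off is the expected one: the paper's route is short and transparent but rests on an external (and somewhat obscure) reference, whereas yours is self-contained at the cost of the $2$-adic and CRT bookkeeping — which you have handled correctly.
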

	
	\begin{proof}
		Observe that the permutation $\sigma_{s}$ indeed induces a permutation $\tau_{s}$ of $\mathbb{Z}/n\mathbb{Z}\setminus\{0\mod n\mathbb{Z}\}$, which sends $x\mod n\mathbb{Z}$ to $sx\mod n\mathbb{Z}$. Lerch \cite{Lerch} determined the sign of this permutation, that is, 
		$$\sgn(\tau_{s})=\begin{cases}
			(\frac{s}{n})  & \mbox{if}\ n\equiv 1\pmod 2,\\
			1              & \mbox{if}\ n\equiv 2\pmod 4,\\
			(-1)^{(s-1)/2} & \mbox{if}\ n\equiv 0\pmod 4.
		\end{cases}$$
		Now our desired result follows directly from  $\sgn(\sigma_{s})=\sgn(\tau_{s})$. This completes the proof.		
	\end{proof}
	
	We also need the following lemma, which we will frequently use in the subsequent proofs.
	
	\begin{lemma}\label{Lem. transformation lemma}
		Let notations be as above. Then 
		\begin{equation*}
			\det B_p(k)=(-1)^{\frac{(n-1)(p+n-3)}{2}}p^{-(n-1)}\det D_p(k),
		\end{equation*}
		where $D_p(k)=[a_{ij}]_{1\le i,j\le n-1}$ with 
		$$a_{ij}=\begin{cases}
			-p                & \mbox{if}\ 1\le i=j\le n-1,\\
			G_p(\chi^{ki-kj}) & \mbox{if}\ 1\le i\neq j\le n-1.  
		\end{cases}$$
	\end{lemma}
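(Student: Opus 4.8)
The plan is to convert the reciprocal Jacobi sums into Gauss sums via Lemma \ref{Lem. basic properties of Gauss and Jacobi sums} and then to pull the Gauss-sum factors out of $B_p(k)$ as row and column multipliers, so that only the combinatorial skeleton $D_p(k)$ survives. First I would split the entries of $B_p(k)$ according to whether $i+j\equiv 0\pmod n$. For the range $1\le i,j\le n-1$ this congruence holds precisely on the anti-diagonal $i+j=n$. Off this anti-diagonal we have $\chi^{ki}\chi^{kj}\ne\varepsilon$, so Lemma \ref{Lem. basic properties of Gauss and Jacobi sums}(ii) gives
$$\frac{1}{J_p(\chi^{ki},\chi^{kj})}=\frac{G_p(\chi^{k(i+j)})}{G_p(\chi^{ki})G_p(\chi^{kj})},$$
while on the anti-diagonal $\chi^{kj}=\chi^{-ki}$, and Lemma \ref{Lem. basic properties of Gauss and Jacobi sums}(i) gives $J_p(\chi^{ki},\chi^{kj})=-\chi^{ki}(-1)$, hence $1/J_p(\chi^{ki},\chi^{kj})=-\chi^{ki}(-1)$.

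Next I would factor $1/G_p(\chi^{ki})$ out of the $i$-th row and $1/G_p(\chi^{kj})$ out of the $j$-th column. This multiplies the determinant by $\prod_{i=1}^{n-1}G_p(\chi^{ki})^{-2}$ and turns $B_p(k)$ into the matrix $N=[N_{ij}]$ whose off-anti-diagonal entries are $N_{ij}=G_p(\chi^{k(i+j)})$, while on the anti-diagonal the Gauss-sum reflection $G_p(\chi^{ki})G_p(\chi^{-ki})=p\chi^{ki}(-1)$ from Lemma \ref{Lem. basic properties of Gauss and Jacobi sums}(i) converts the entry into $p\chi^{ki}(-1)\cdot\big(-\chi^{ki}(-1)\big)=-p$. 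Reversing the order of the columns (the permutation $j\mapsto n-j$, whose sign is $(-1)^{(n-1)(n-2)/2}$) then moves the $-p$'s onto the main diagonal and, using $\chi^{kn}=\varepsilon$, replaces each $G_p(\chi^{k(i+j)})$ by $G_p(\chi^{k(i-j)})$; the resulting matrix is exactly $D_p(k)$, so $\det N=(-1)^{(n-1)(n-2)/2}\det D_p(k)$.

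Finally I would evaluate the scalar prefactor. Pairing $i$ with $n-i$ and using $G_p(\chi^{ki})G_p(\chi^{-ki})=p\chi^{ki}(-1)$ gives $\big(\prod_{i=1}^{n-1}G_p(\chi^{ki})\big)^2=p^{n-1}\prod_{i=1}^{n-1}\chi^{ki}(-1)$, and since $\sum_{i=1}^{n-1}ki=(p-1)(n-1)/2$ together with $\chi(-1)=-1$ the character product collapses to $(-1)^{(p-1)(n-1)/2}$. Assembling the sign from the column reversal with this prefactor produces
$$(-1)^{\frac{(p-1)(n-1)}{2}+\frac{(n-1)(n-2)}{2}}=(-1)^{\frac{(n-1)(p+n-3)}{2}}\quad\text{and the factor }p^{-(n-1)},$$
which is the claimed constant. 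The bookkeeping is entirely elementary; the only point requiring genuine care is the separate treatment of the anti-diagonal, where both reflection formulas $J_p(\psi,\psi^{-1})=-\psi(-1)$ and $G_p(\psi)G_p(\psi^{-1})=p\psi(-1)$ must be combined to verify that these entries become \emph{exactly} $-p$, so that after the column reversal no stray characters survive and the matrix is literally $D_p(k)$.
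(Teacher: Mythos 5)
Your proof is correct and is essentially the paper's own argument: both convert $1/J_p$ into Gauss sums via Lemma \ref{Lem. basic properties of Gauss and Jacobi sums}, extract the factors $G_p(\chi^{ki})$ from rows and columns, apply the reversal permutation $j\mapsto n-j$ (the paper's $\sigma_{-1}$) with sign $(-1)^{(n-1)(n-2)/2}$, and finish with the reflection formula $G_p(\psi)G_p(\psi^{-1})=p\psi(-1)$ to evaluate the scalar prefactor. The only cosmetic differences are that you permute the columns after factoring rather than before, and you obtain the reversal sign by counting inversions directly instead of invoking Lemma \ref{Lem. permutation}.
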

	
	\begin{proof}
		By Lemma \ref{Lem. basic properties of Gauss and Jacobi sums} we first have 
		\begin{equation*}
			\frac{1}{J_p(\chi^{ki},\chi^{-kj})}=
			\begin{cases}
				\frac{-p}{G_p(\chi^{ki})G_p(\chi^{-kj})}  & \mbox{if}\ 1\le i=j\le n-1,\\ \\
				\frac{G_p(\chi^{ki-kj})}{G_p(\chi^{ki})G_p(\chi^{-kj})}  & \mbox{if}\ 1\le i\neq j\le n-1.
			\end{cases}
		\end{equation*}
		By this and Lemma \ref{Lem. permutation} one can verify that 
		\begin{align*}
			\det B_p(k)
			=&\sgn(\sigma_{-1})\det\left[\frac{1}{J_p(\chi^{ki},\chi^{-kj})}\right]_{1\le i,j\le n-1}\\
			=&(-1)^{\frac{(n-1)(n-2)}{2}}\det\left[\frac{1}{J_p(\chi^{ki},\chi^{-kj})}\right]_{1\le i,j\le n-1}\\
			=&(-1)^{\frac{(n-1)(n-2)}{2}}\prod_{1\le i\le n-1}\frac{1}{G_p(\chi^{ki})G_p(\chi^{-ki})}\cdot\det D_p(k)\\
			=&(-1)^{\frac{(n-1)(n-2)}{2}}(-1)^{\frac{kn(n-1)}{2}}p^{-(n-1)}\det D_p(k)\\
			=&(-1)^{\frac{(n-1)(p+n-3)}{2}}p^{-(n-1)}\det D_p(k).
		\end{align*}
		This completes the proof.
	\end{proof}
	
	Now we are in a position to prove our first theorem.
	
	{\noindent\bf Proof of Theorem \ref{Thm. local properties of Bp(k)}.} (i) Let $\zeta_{p-1}$ be a primitive $(p-1)$-th root of unity. Clearly $\det B_p(k)\in\mathbb{Q}(\zeta_{p-1})$. It is known that the Galois group
	$$\Gal\left(\mathbb{Q}(\zeta_{p-1})/\mathbb{Q}\right)\cong\left(\mathbb{Z}/(p-1)\mathbb{Z}\right)^{\times}.$$
	Now for any $\rho_{s}\in\Gal\left(\mathbb{Q}(\zeta_{p-1})/\mathbb{Q}\right)$ with $\gcd(s,p-1)=1$ and $\rho_{s}(\zeta_{p-1})=\zeta_{p-1}^s$, by Lemma \ref{Lem. permutation} we have 
	\begin{align*}
		\rho_{s}\left(\det B_p(k)\right)
		=&\det\left[\frac{1}{J_p(\chi^{ksi},\chi^{ksj})}\right]_{1\le i,j\le n-1}\\
		=&\sgn(\sigma_{s})^2\cdot\det B_p(k)\\
		=&\det B_p(k).
	\end{align*}
	By the Galois theory, we see that $\det B_p(k)\in\mathbb{Q}$. On the other hand, it is known that every generator $\chi'$ of $\widehat{\mathbb{F}_p^{\times}}$ can be written as $\chi^s$ for some integer $s$ with $\gcd(s,p-1)=1$. Thus, 
	\begin{align*}
		\det \left[\frac{1}{J_p(\chi'^{ki},\chi'^{kj})}\right]_{1\le i,j\le n-1}
		=&\det\left[\frac{1}{J_p(\chi^{ksi},\chi^{ksj})}\right]_{1\le i,j\le n-1}\\
		=&\rho_{s}\left(\det B_p(k)\right)\\
		=&\det B_p(k).
	\end{align*}
	This implies that $\det B_p(k)$ is independent of the choice of the generator $\chi$. This completes the proof of (i).
	
	(ii) The result holds trivially for $p=3$. Now suppose $p\ge 5$.  As proved in (i), we know that $\det B_p(k)$ is independent of the choice of $\chi$. Thus, in the remaining part of this proof, we let $\chi=\omega_p$ be the Teich\"{u}muller character.
	
	By Lemma \ref{Lem. transformation lemma} we first have 
	\begin{equation}\label{Eq. a in the proof of Thm.1}
		\det B_p(k)=(-1)^{\frac{(n-1)(p+n-3)}{2}}p^{-(n-1)}\det D_p(k).
	\end{equation}
	Adopting the notations in Section 2, by Lemma \ref{Lem. Gross-Koblitz} we see that 
	\begin{equation*}
		\det D_p(k)=\det\left[\begin{array}{ccccc}
			\pi^{p-1} & -\pi^k\Gamma_p(1/n) & \cdots & -\pi^{(n-2)k}\Gamma_p((n-2)/n)\\
			-\pi^{(n-1)k}\Gamma_p((n-1)/n) & \pi^{p-1} & \cdots & -\pi^{(n-3)k}\Gamma_p((n-3)/n)\\
			\vdots & \vdots& \ddots & \vdots\\
			-\pi^{2k}\Gamma_p(2/n) & -\pi^{3k}\Gamma_p(3/n) & \cdots & \pi^{p-1}
		\end{array}\right].
	\end{equation*}
	For each $1\le i\le n-2$, extract the factor $\pi^k$ from the $i$-th row. Also, extract the factor $\pi^{2k}$ from the $(n-1)$-th row. Then one can verify that 
	\begin{equation}\label{Eq. b in the proof of Thm.1}
		\det D_p(k)=\pi^{k(n-2)+2k}\det E_p(k)=-p\det E_p(k),
	\end{equation}
	where 
	$$E_p(k)=\left[\begin{array}{ccccc}
		\pi^{(n-1)k} & -\Gamma_p(1/n) & \cdots & -\pi^{(n-3)k}\Gamma_p((n-2)/n)\\
		-\pi^{(n-2)k}\Gamma_p((n-1)/n) & \pi^{(n-1)k} & \cdots & -\pi^{(n-4)k}\Gamma_p((n-3)/n)\\
		\vdots & \vdots& \ddots & \vdots\\
		-\Gamma_p(2/n) & -\pi^{k}\Gamma_p(3/n) & \cdots & \pi^{(n-2)k}
	\end{array}\right].$$
	We next evaluate $\det E_p(k)\mod \pi$. It is clear that 
	\begin{align}\label{Eq. c in the proof of Thm.1}
		\det E_p(k)
		\equiv&\det\left[\begin{array}{ccccc}
			0              & -\Gamma_p(1/n) &  0              & \cdots & 0\\
			0              &  0             & -\Gamma_p(1/n)  & \cdots & 0\\
			\vdots         & \vdots         & \vdots          & \ddots & \vdots\\
			0              & 0              & 0               & \cdots & -\Gamma_p(1/n)\\
			-\Gamma_p(2/n) & 0              & 0               & \cdots & 0
		\end{array}\right]\notag\\
		\equiv& -\Gamma_p\left(\frac{1}{n}\right)^{n-2}\Gamma_p\left(\frac{2}{n}\right)\pmod{\pi}.
	\end{align}
	This implies that $\ord_p(\det E_p(k))=0$. Hence by (\ref{Eq. a in the proof of Thm.1}) and (\ref{Eq. b in the proof of Thm.1}) we first obtain 
	$$\ord_p\left(\det B_p(k)\right)=-(n-2).$$
	
	We now turn to $p^{n-2}\det B_p(k) \mod p$. By (\ref{Eq. a in the proof of Thm.1})--(\ref{Eq. c in the proof of Thm.1}) again we see that 
	\begin{equation}\label{Eq. d in the proof of Thm.1}
		p^{n-2}\det B_p(k)=(-1)^{1+\frac{(n-1)(p+n-3)}{2}}\cdot\det E_p(k). 
	\end{equation}
	Since 
	$$\frac{1}{n}\equiv -k\pmod p\ \text{and}\ \frac{2}{n}\equiv -2k\pmod p,$$
	by Lemma \ref{Lem. basic properties of p adic gamma} and (\ref{Eq. c in the proof of Thm.1}) 
	\begin{align}\label{Eq. e in the proof of Thm.1}
		\det E_p(k)
		\equiv& -\Gamma_p(-k)^{n-2}\cdot\Gamma_p(-2k)\notag\\
		\equiv& \left(\frac{(-1)^{k+1}}{\Gamma_p(k+1)}\right)^{n-2}\frac{1}{\Gamma_p(2k+1)}\notag\\
		\equiv&-\left(\frac{1}{k!}\right)^{n-2}\frac{1}{(2k)!}\pmod {\pi}.
	\end{align}
	
	Noting that $\det E_p(k)\in\mathbb{Z}_p$ and Combining (\ref{Eq. e in the proof of Thm.1}) with (\ref{Eq. d in the proof of Thm.1}), we finally obtain 
	$$p^{n-2}\det B_p(k)\equiv (-1)^{\frac{(n-1)(p+n-3)}{2}}	\left(\frac{1}{k!}\right)^{n-2}\frac{1}{(2k)!}\pmod {p}.$$
	
	In view of the above, we have completed the proof of Theorem \ref{Thm. local properties of Bp(k)}.\qed

	\section{Preliminaries on Almost Circulant Matrices}
	
	\subsection{Circulant Matrices}
	\setcounter{lemma}{0}
	\setcounter{theorem}{0}
	\setcounter{equation}{0}
	\setcounter{conjecture}{0}
	\setcounter{remark}{0}
	\setcounter{corollary}{0}

	We first introduce some basic facts on circulant matrices. Readers may refer to the survey paper \cite{Kra} for more details on circulant matrices. Let $n\ge2$ be an integer and let $\v=(v_0,v_1,\cdots,v_{n-1})\in\mathbb{C}^n$. The circulant matrix $V_n$ of $\v$ is an $n\times n$ matrix defined by 
	$$V_n=[v_{j-i}]_{0\le i,j\le n-1},$$
	where $v_s=v_t$ whenever $s\equiv t\pmod n$. Specifically, 
	$$V_n=\left[\begin{array}{ccccc}
		v_0       &  v_1     & \cdots  & v_{n-2}  & v_{n-1} \\
		v_{n-1}   &  v_0     & \cdots  & v_{n-3}  & v_{n-2} \\
		\vdots    &  \vdots  & \ddots  & \vdots   & \vdots  \\
		v_2       &  v_3     & \cdots  & v_0      & v_1     \\
		v_1       &  v_2     & \cdots  & v_{n-1}  & v_0
	\end{array}\right].$$
	Let $\xi=e^{2\pi{\bf i}/n}$. For $l\in\{0,1,\cdots,n-1\}$, let 
	\begin{equation*}
		\lambda_l=v_0\cdot1+v_1\cdot\xi^l+v_2\cdot\xi^{2l}+\cdots+v_{n-1}\cdot\xi^{(n-1)l},
	\end{equation*}
	and let the column vector 
	\begin{equation*}
		\x_l=\frac{1}{\sqrt{n}}\left(1,\xi^l,\cdots,\xi^{(n-1)l}\right)^T.
	\end{equation*}
	Then it is know that $V_n\x_l=\lambda_l\x_l$ and that $\lambda_0,\cdots,\lambda_{n-1}$ are exactly all the eigenvalues of $V_n$. In addition, we let the matrix 
	\begin{equation}\label{Eq. definition of E}
		E_n=\left(\x_0,\x_1,\cdots,\x_{n-1}\right)=\frac{1}{\sqrt{n}}\left[\xi^{ij}\right]_{0\le i,j\le n-1}=\frac{1}{\sqrt{n}}
		\left[\begin{array}{cccc}
			1      &  1             & \cdots  &   1                \\
			1      & \xi         & \cdots  & \xi^{n-1}        \\
			\vdots & \vdots         & \ddots  & \vdots               \\
			1      & \xi^{n-2}   & \cdots  & \xi^{(n-2)(n-1)}  \\
			1      & \xi^{n-1}   & \cdots  & \xi^{(n-1)(n-1)}  \\
		\end{array}\right].
	\end{equation}
	Then it is easy to verify that $E_n$ is a symmetric unitary matrix, that is, 
	\begin{equation}\label{Eq. E is a symmetric unitary matrix}
		E_n^{-1}=\overline{E_n}.
	\end{equation}
	We also have 
	\begin{equation}\label{Eq. product of Vn and En}
		V_nE_n=E_n\diag\left(\lambda_0,\lambda_1,\cdots,\lambda_{n-1}\right).
	\end{equation}

	\subsection{An Auxiliary Theorem on Almost Circulant Matrices} Now we pose the definition of almost circulant matrices. Let notations be as above. The almost circulant matrix of $\v=(v_0,v_1,\cdots,v_{n-1})$ is an $(n-1)\times (n-1)$ matrix defined by 
	\begin{equation}\label{Eq. definition of almost circulant matrices}
		W_n=\left[v_{j-i}\right]_{1\le i,j\le n-1},
	\end{equation}
	where $v_s=v_t$ whenever $s\equiv t\pmod n$. Then following theorem gives the explicit value of $\det W_n$. 
	
	\begin{theorem}\label{Thm. det of an almost circulant matrix}
		Let notations be as above. Then 
		$$\det W_n=\frac{1}{n}\sum_{l=0}^{n-1}\prod_{k\neq l}\lambda_k.$$
	\end{theorem}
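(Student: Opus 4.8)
The key structural observation is that $W_n$ is exactly the principal submatrix of the full circulant matrix $V_n$ obtained by deleting the $0$-th row and the $0$-th column: in the notation of \eqref{Eq. definition of almost circulant matrices}, $\det W_n = \det[v_{j-i}]_{1\le i,j\le n-1}$ is the principal $(n-1)\times(n-1)$ minor $M_0$ of $V_n$ attached to the index set $\{1,\dots,n-1\}$. The plan is therefore to express the symmetric sum $\sum_l\prod_{k\neq l}\lambda_k$ in terms of the principal minors of $V_n$ through the characteristic polynomial, and then to use the cyclic symmetry of circulant matrices to show that all these minors coincide with $\det W_n$.

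First I would recall the standard expansion of the characteristic polynomial of $V_n$ in terms of its principal minors,
$$\det(tI_n-V_n)=\sum_{j=0}^{n}(-1)^{j}E_j(V_n)\,t^{n-j},$$
where $E_j(V_n)$ denotes the sum of all $j\times j$ principal minors of $V_n$. Since $\lambda_0,\dots,\lambda_{n-1}$ are precisely the eigenvalues of $V_n$, each $E_j(V_n)$ equals the $j$-th elementary symmetric function of the $\lambda$'s. Comparing the coefficient of $t$ (the case $j=n-1$) on both sides, the signs cancel and one obtains
$$E_{n-1}(V_n)=\sum_{l=0}^{n-1}\prod_{k\neq l}\lambda_k,$$
where the left-hand side is the sum $E_{n-1}(V_n)=\sum_{m=0}^{n-1}M_m$ of the $n$ principal $(n-1)\times(n-1)$ minors $M_m$ of $V_n$, with $M_m$ obtained by deleting row $m$ and column $m$.

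Next I would exploit the cyclic symmetry. Let $P$ be the permutation matrix of the cyclic shift $m\mapsto m+1\pmod n$. Because $V_n$ is circulant it commutes with $P$, so $P^{-1}V_nP=V_n$; since conjugation by a permutation matrix simultaneously relabels rows and columns by the same permutation, the principal minor of $V_n$ on the complement of $\{m\}$ equals the principal minor on the complement of $\{m+1\}$. As the shift is a single $n$-cycle, this forces $M_0=M_1=\cdots=M_{n-1}$, whence $E_{n-1}(V_n)=n\,M_0=n\det W_n$. Combining this with the displayed identity for $E_{n-1}(V_n)$ gives $\det W_n=\frac1n\sum_{l=0}^{n-1}\prod_{k\neq l}\lambda_k$, as claimed.

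The only genuinely delicate point is the equality of the principal minors $M_m$; everything else is the routine dictionary between symmetric functions of eigenvalues and principal minors. I expect the main obstacle to be merely the careful bookkeeping in the conjugation-by-$P$ step, namely verifying $P^{-1}V_nP=V_n$ and that deleting index $m$ from the conjugated matrix corresponds to deleting index $m+1$ from $V_n$. As an alternative that sidesteps the symmetry argument when $V_n$ is invertible, one can identify $\det W_n=M_0$ with the $(0,0)$-entry of $\operatorname{adj}(V_n)=\det(V_n)\,V_n^{-1}$ and use the diagonalization $V_n=E_n\diag(\lambda_0,\dots,\lambda_{n-1})\overline{E_n}$ coming from \eqref{Eq. product of Vn and En} and \eqref{Eq. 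E is a symmetric unitary matrix}: the explicit entries of $E_n$ give $(V_n^{-1})_{00}=\frac1n\sum_l\lambda_l^{-1}$, so $\det W_n=\det(V_n)\cdot\frac1n\sum_l\lambda_l^{-1}=\frac1n\sum_l\prod_{k\neq l}\lambda_k$, and the possibly singular case then follows by a polynomial-identity argument since both sides are polynomials in $v_0,\dots,v_{n-1}$.
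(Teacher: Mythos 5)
Your proof is correct, and it takes a genuinely different route from the paper's. You reduce everything to two standard facts: (a) the coefficient of $t$ in $\det(tI_n-V_n)$ equals $(-1)^{n-1}$ times the sum $\sum_{m}M_m$ of the principal $(n-1)\times(n-1)$ minors and also $(-1)^{n-1}\sum_l\prod_{k\neq l}\lambda_k$; and (b) since $V_n=\sum_k v_kP^k$ commutes with the cyclic shift $P$, conjugation by $P$ shows all $n$ principal minors $M_m$ coincide, so $n\det W_n=nM_0=\sum_l\prod_{k\neq l}\lambda_k$. The paper instead borders $W_n$ into an $n\times n$ matrix $\widetilde{W_n}$ with the same determinant, conjugates by the DFT matrix $E_n$ to write $E_n^{-1}\widetilde{W_n}E_n=\bigl(I_n+\overline{E_n}\widetilde{E_n}\bigr)\diag(\lambda_0,\dots,\lambda_{n-1})$, and evaluates the rank-one-perturbation determinant explicitly as $\frac1n\sum_l\lambda_l^{-1}$; this computation requires $\lambda_0\cdots\lambda_{n-1}\neq 0$, so the paper needs a second case with a perturbation $v_0\mapsto v_0+\delta$ and a limit $\delta\to 0^+$. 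Your argument buys uniformity and brevity: it is a purely algebraic identity with no invertibility hypothesis, no case split, and no limiting argument (note that your closing ``alternative'' via the adjugate is essentially the paper's proof in disguise, including its need for the nonsingular/singular dichotomy, so the principal-minor argument is the one worth keeping). What the paper's computation buys is self-containedness relative to its Section 4.1 setup — it reuses the explicit eigenvector machinery $(E_n,\x_l,\lambda_l)$ it has already built, and the same bordering-plus-diagonalization template is what it then applies to $D_p(k)$ in Section 5 — whereas your proof quietly invokes the (standard but unstated in the paper) fact relating characteristic-polynomial coefficients to sums of principal minors, which you should cite or prove if this were to replace the paper's argument.
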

	
	{\noindent\bf Proof of Theorem \ref{Thm. det of an almost circulant matrix}.} We divide our proof into two cases.
	
	{\bf Case I.} $\lambda_0\lambda_1\cdots\lambda_{n-1}\neq 0$.
	
	In this case, let $\widetilde{W_n}=[b_{ij}]_{0\le i,j\le n-1}$ be an $n\times n$ matrix defined by
	$$b_{ij}=\begin{cases}
		1          & \mbox{if}\ i=j=0,\\
		0          & \mbox{if}\ i=0\ \text{and}\ 1\le j\le n-1,\\
		v_{j-i}    &   \mbox{otherwise},
	\end{cases}$$
	i.e., 
	$$\widetilde{W_n}=\left[\begin{array}{ccccc}
		1         &  0       & \cdots  &  0       &  0\\
		v_{n-1}   &  v_0     & \cdots  & v_{n-3}  & v_{n-2} \\
		\vdots    &  \vdots  & \ddots  & \vdots   & \vdots  \\
		v_2       &  v_3     & \cdots  & v_0      & v_1     \\
		v_1       &  v_2     & \cdots  & v_{n-1}  & v_0
	\end{array}\right].$$
	
	For $1\le i\le n-1$ and $0\le l\le n-1$, one can verify that 
	\begin{equation*}
		\frac{1}{\sqrt{n}}\sum_{0\le j\le n-1}v_{j-i}\cdot\xi^{jl}
		=\frac{1}{\sqrt{n}}\sum_{0\le j\le n-1}v_{j}\cdot\xi^{(j+i)l}=\lambda_l\cdot\frac{1}{\sqrt{n}}\xi^{il}.
	\end{equation*}
	This implies that for any $0\le l\le n-1$ we have 
	\begin{align}\label{Eq. 1 in the proof of almost circulant matrix}
		\widetilde{W_n}\x_l
		=&\frac{1}{\sqrt{n}}\left(1,\lambda_l\xi^l,\cdots,\lambda_l\xi^{(n-1)l}\right)^T\notag\\
		=&\lambda_l\left(\x_l+\widetilde{\x_l}\right),
	\end{align}
	where 
	$$\widetilde{\x_l}=\frac{1}{\sqrt{n}}\left(-1+\frac{1}{\lambda_l},0,\cdots,0\right)^T.$$
	By (\ref{Eq. 1 in the proof of almost circulant matrix}) we obtain 
	\begin{equation*}
		\widetilde{W_n}E_n=\left(E_n+\widetilde{E_n}\right)\diag\left(\lambda_0,\lambda_1,\cdots,\lambda_{n-1}\right),
	\end{equation*}
	where $E_n$ is defined by (\ref{Eq. definition of E}) and 
	$$\widetilde{E_n}=\frac{1}{\sqrt{n}}\left[\begin{array}{cccc}
		-1+\frac{1}{\lambda_0}  &  -1+\frac{1}{\lambda_1}  & \cdots & -1+\frac{1}{\lambda_{n-1}}\\
		0                       &   0                      & \cdots & 0\\
		\vdots                  & \vdots                   & \ddots & \vdots\\
		0                       & 0                        & \cdots & 0
	\end{array}\right].$$
	By (\ref{Eq. E is a symmetric unitary matrix}), (\ref{Eq. product of Vn and En}) and the above, we see that 
	$$E_n^{-1}\widetilde{W_n}E_n=\left(I_n+\overline{E_n}\cdot\widetilde{E_n}\right)\diag\left(\lambda_0,\lambda_1,\cdots,\lambda_{n-1}\right).$$
	Hence 
	\begin{equation}\label{Eq. 2 in the proof of almost circulant matrix}
		\det W_n=\det\widetilde{W_n}=\prod_{l=0}^{n-1}\lambda_l\cdot\det \left(I_n+\overline{E_n}\cdot\widetilde{E_n}\right).
	\end{equation}
	Note that 
	$$I_n+\overline{E_n}\cdot\widetilde{E_n}=\frac{1}{n}
	\left[\begin{array}{cccc}
		n-(1-\frac{1}{\lambda_0})  &  -(1-\frac{1}{\lambda_1}) & \cdots & -(1-\frac{1}{\lambda_{n-1}})\\
		-(1-\frac{1}{\lambda_0})   & n-(1-\frac{1}{\lambda_1}) & \cdots & -(1-\frac{1}{\lambda_{n-1}})\\
		\vdots                     & \vdots                    & \ddots & \vdots\\
		-(1-\frac{1}{\lambda_0})   & -(1-\frac{1}{\lambda_1})  & \cdots & n-(1-\frac{1}{\lambda_{n-1}})
	\end{array}\right].$$
	Letting 
	$$\mu=\sum_{l=0}^{n-1}\frac{1}{\lambda_l},$$
	one can verify that 
	\begin{align*}
		\det \left(I_n+\overline{E_n}\cdot\widetilde{E_n}\right)
		&=\frac{1}{n^n}\det \left[\begin{array}{cccc}
			\mu   &  -(1-\frac{1}{\lambda_1}) & \cdots & -(1-\frac{1}{\lambda_{n-1}})\\
			\mu   & n-(1-\frac{1}{\lambda_1}) & \cdots & -(1-\frac{1}{\lambda_{n-1}})\\
			\vdots                     & \vdots                    & \ddots & \vdots\\
			\mu   & -(1-\frac{1}{\lambda_1})  & \cdots & n-(1-\frac{1}{\lambda_{n-1}})
		\end{array}\right]\\
		&=\frac{\mu}{n^n}\det \left[\begin{array}{cccc}
			1       &  0     & \cdots & 0\\
			1       & n      & \cdots & 0\\
			\vdots  & \vdots & \ddots & \vdots\\
			1       & 0      & \cdots & n
		\end{array}\right]\\
		&=\frac{1}{n}\sum_{l=0}^{n-1}\frac{1}{\lambda_l}.
	\end{align*}
	This, together with (\ref{Eq. 2 in the proof of almost circulant matrix}), implies that 
	$$\det W_n=\frac{1}{n}\sum_{l=0}^{n-1}\prod_{k\neq l}\lambda_k.$$
	
	{\bf Case II.} $\lambda_0\lambda_1\cdots\lambda_{n-1}=0$.
	
	In this case, there exists a real number $\gamma>0$ such that $\lambda_l+\delta\neq 0$
	for any $0\le l\le n-1$ and any $0<\delta<\gamma$. For any $0<\delta<\gamma$, let 
	$$\v(\delta)=(v_0+\delta,v_1,\cdots,v_{n-1}),$$
	and let 
	$$V_n(\delta)=\delta I_n+V_n$$
	be the circulant matrix of $\v(\delta)$. Then 
	$$W_n(\delta)=\delta I_{n-1}+W_n$$
	is the almost circulant matrix of $\v(\delta)$. As all eigenvalues of $V_n(\delta)$ are nonzero, by Case I we have 
	$$\det W_n(\delta)=\frac{1}{n}\sum_{l=0}^{n-1}\prod_{k\neq l}\left(\lambda_k+\delta\right).$$
	Then 
	$$\det W_n=\lim_{\delta\rightarrow 0^+}\det W_n(\delta)=\frac{1}{n}\sum_{l=0}^{n-1}\prod_{k\neq l}\lambda_k.$$
	
	In view of the above, we have completed the proof of Theorem \ref{Thm. det of an almost circulant matrix}.\qed

	\section{Proof of Theorem \ref{Thm. global properties of Bp(k)}}
	\setcounter{lemma}{0}
	\setcounter{theorem}{0}
	\setcounter{equation}{0}
	\setcounter{conjecture}{0}
	\setcounter{remark}{0}
	\setcounter{corollary}{0}

	Recall that $p$ is an odd prime and that $\chi$ is a generator of $\widehat{\mathbb{F}_p^{\times}}$. Throughout this section, we let $\zeta_p=e^{2\pi{\bf i}/p}$. For any divisor $k$ of $p-1$ with $1\le k<p-1$, we set $p-1=kn$ and let 
	$$U_k=\{y\in\mathbb{F}_p: y^k=1\}$$
	be the subgroup of all $k$-th roots of unity over $\mathbb{F}_p$. We begin with the following known lemma (see \cite[Lemma 3.5]{WLWY}).
	
	\begin{lemma}\label{Lem. eigenvalues of gauss sums matrices}
		Let notations be as above. For any element $b$ in the quotient group $\mathbb{F}_q^{\times}/U_k$, let 
		\begin{equation*}
			\lambda_b:=n\sum_{y\in U_k}\zeta_p^{by}.
		\end{equation*}
		Then these $\lambda_b$ (where $b\in\mathbb{F}_q^{\times}/U_k$) are exactly all the eigenvalues of $[G_p(\chi^{ki-kj})]_{0\le i,j\le n-1}$. 
	\end{lemma}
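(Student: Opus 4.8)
The plan is to identify the matrix $M:=[G_p(\chi^{ki-kj})]_{0\le i,j\le n-1}$ with a circulant matrix and then read off its eigenvalues from the formula recalled in Subsection 4.1. Since $p-1=kn$, the character $\chi^{k(i-j)}$ depends only on $i-j\bmod n$, so setting $v_s=G_p(\chi^{-ks})$ (indices taken mod $n$) the $(i,j)$-entry of $M$ is $v_{j-i}$; that is, $M$ is exactly the circulant matrix $V_n$ of $\v=(v_0,\dots,v_{n-1})$. By the discussion in Subsection 4.1 (see (\ref{Eq. product of Vn and En})), the eigenvalues of $M$ are therefore $\nu_l=\sum_{s=0}^{n-1}v_s\xi^{sl}$ for $0\le l\le n-1$, where $\xi=e^{2\pi{\bf i}/n}$. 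It then remains to evaluate each $\nu_l$ in closed form and match it with the stated $\lambda_b$.

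First I would note that the diagonal term fits the general pattern: because $\varepsilon(0)=0$ and $\varepsilon(x)=1$ for $x\neq0$, we have $G_p(\varepsilon)=\sum_{x\in\mathbb{F}_p^\times}\zeta_p^x=-1=\sum_{x\in\mathbb{F}_p^\times}\chi^0(x)\zeta_p^x$. Hence $v_s=\sum_{x\in\mathbb{F}_p^\times}\chi^{-ks}(x)\zeta_p^x$ holds uniformly for every $s$, including $s\equiv0\pmod n$. Substituting this and interchanging the two summations gives
\begin{equation*}
\nu_l=\sum_{x\in\mathbb{F}_p^\times}\zeta_p^x\sum_{s=0}^{n-1}\chi^{-ks}(x)\xi^{sl}.
\end{equation*}

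Next I would fix a generator $g$ of $\mathbb{F}_p^\times$ and normalize $\chi$ so that $\chi(g)=e^{2\pi{\bf i}/(p-1)}$; then $\chi^k(g)=\xi$. Writing $x=g^a$, the inner sum collapses to $\sum_{s=0}^{n-1}\xi^{s(l-a)}$, which equals $n$ if $a\equiv l\pmod n$ and $0$ otherwise, by orthogonality of roots of unity. Since $U_k$ is the unique subgroup of order $k$ of the cyclic group $\mathbb{F}_p^\times$, it equals $\{g^a:a\equiv0\pmod n\}$, so the surviving terms $x=g^a$ with $a\equiv l\pmod n$ form precisely the coset $g^lU_k$. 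Therefore $\nu_l=n\sum_{y\in U_k}\zeta_p^{g^ly}=\lambda_{g^l}$, and as $l$ ranges over $0,\dots,n-1$ the class $g^l$ ranges over all of $\mathbb{F}_p^\times/U_k$, which yields the claim.

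I do not anticipate a serious obstacle; the only delicate points are the uniform treatment of the $s\equiv0$ term via $G_p(\varepsilon)=-1$ and the normalization of $\chi$. A different choice of generator would give $\chi^k(g)=\xi^r$ with $\gcd(r,n)=1$, which merely permutes the $\nu_l$ among themselves and so leaves the eigenvalue multiset unchanged; this is consistent with the generator-independence already established in Theorem \ref{Thm. local properties of Bp(k)}(i).
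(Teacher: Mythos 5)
Your proof is correct. Note, however, that the paper itself does not prove this lemma: it simply imports it from \cite[Lemma 3.5]{WLWY} (``We begin with the following known lemma''), so there is no internal argument to compare against; your write-up effectively fills in that citation. Your route is the natural one and uses exactly the machinery the paper sets up in Subsection 4.1: since $\chi$ has order $p-1=kn$, the matrix $[G_p(\chi^{ki-kj})]_{0\le i,j\le n-1}$ is the circulant matrix of $(v_0,\dots,v_{n-1})$ with $v_s=G_p(\chi^{-ks})$, its eigenvalues are $\nu_l=\sum_{s=0}^{n-1}v_s\xi^{sl}$, and orthogonality of the $n$-th roots of unity collapses $\nu_l$ to $n\sum_{y\in U_k}\zeta_p^{g^l y}=\lambda_{g^l}$, with $l\mapsto g^lU_k$ a bijection onto $\mathbb{F}_p^{\times}/U_k$. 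The two delicate points you flag are indeed the right ones, and you handle both correctly: the diagonal entries fit the uniform formula because the paper's convention $\varepsilon(0)=0$ gives $G_p(\varepsilon)=-1=\sum_{x\in\mathbb{F}_p^{\times}}\chi^{0}(x)\zeta_p^{x}$, and an arbitrary generator $\chi$ merely replaces $\xi$ by $\xi^{r}$ with $\gcd(r,n)=1$ in the inner sum, which permutes the $\nu_l$ without changing the eigenvalue multiset. (If you wanted to be fully pedantic you could also remark that $\lambda_b$ is well defined on cosets, since replacing $b$ by $bu$ with $u\in U_k$ only permutes the summands, and that the subscript $\mathbb{F}_q$ in the paper's statement is a typo for $\mathbb{F}_p$; neither affects your argument.)
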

	
	Now we are in a position to prove our second theorem. 
	
	{\noindent\bf Proof of Theorem \ref{Thm. global properties of Bp(k)}.} (i) Recall that 
	$$\theta_k=-k+\sum_{y\in U_k}\zeta_p^y=\sum_{y\in U_k}\left(\zeta^y_p-1\right).$$
	We first claim that $\mathbb{Q}(\theta_k)$ is the unique intermediate field of $\mathbb{Q}(\zeta_p)$ with $\left[\mathbb{Q}(\theta_k):\mathbb{Q}\right]=n$. In fact, letting the subgroup 
	$$G_{k}=\left\{\rho\in\Gal\left(\mathbb{Q}(\zeta_p)/\mathbb{Q}\right): \rho(\theta_k)=\theta_k\right\},$$
	it is sufficient to prove that $[\Gal\left(\mathbb{Q}(\zeta_p)/\mathbb{Q}\right):G_k]=n$. 
	Let $\rho_{s}\in\Gal\left(\mathbb{Q}(\zeta_p)/\mathbb{Q}\right)$ with $s\in\mathbb{F}_p^{\times}$ and $\rho_{s}(\zeta_p)=\zeta_p^s$ be an arbitrary element. Noting that $\zeta_p,\zeta_p^2,\cdots,\zeta_p^{p-1}$ are linearly independent over $\mathbb{C}$, one can verify that 
	\begin{align*}
		\rho_{s}\in G_k & \Leftrightarrow \rho_{s}(\theta_k)=\theta_k\\
		& \Leftrightarrow \rho_{s}(\theta_k+k)=\theta_k+k\\
		& \Leftrightarrow \sum_{y\in U_k}\zeta_p^{y}=\sum_{y\in U_k}\zeta_p^{sy}\\
		& \Leftrightarrow s\cdot U_k=\{sy:\ y\in U_k\}=U_k\\
		& \Leftrightarrow s\in U_k.
	\end{align*}
	This implies that $G_k\cong U_k$ and hence $[\Gal\left(\mathbb{Q}(\zeta_p)/\mathbb{Q}\right):G_k]=[\mathbb{F}_p^{\times}:U_k]=n$. Thus, our above claim holds. Recall that $P_k(t)$ is the minimal polynomial of $\theta_k$ over $\mathbb{Q}$. By the above, we obtain 
	\begin{equation}\label{Eq. Pk(t) in the proof of Thm. 2}
		P_k(t)=\prod_{\rho\in\Gal\left(\mathbb{Q}(\theta_k)/\mathbb{Q}\right)}\left(t-\rho(\theta_k)\right)=\prod_{b\in\mathbb{F}_p^{\times}/U_k}\left(t-\theta_k^{(b)}\right),
	\end{equation}
	where 
	$$\theta_k^{(b)}=\rho_{b}(\theta_k)=-k+\sum_{y\in U_k}\zeta_p^{by}$$
	for any element $b\in\mathbb{F}_p^{\times}/U_k$. 
	
	Now let $\widetilde{D_p}(k)=[\widetilde{a_{ij}}]_{0\le i,j\le n-1}$ be an $n\times n$ matrix with 
	$$\widetilde{a_{ij}}=\begin{cases}
		-p                & \mbox{if}\ 0\le i=j\le n-1,\\
		G_p(\chi^{ki-kj}) & \mbox{if}\ 0\le i\neq j\le n-1. 
	\end{cases} $$
	Clearly $\widetilde{D_p}(k)$ is the transpose of the circulant matrix of $\v=(v_0,v_1,\cdots,v_{n-1})$ where 
	$v_0=-p$ and $v_i=G_p(\chi^{ki})$ for $i=1,2,\cdots,n-1$. Also, we have 
	$$\widetilde{D_p}(k)=(-p+1)I_n+\left[G_p(\chi^{ki-kj})\right]_{0\le i,j\le n-1}.$$
	By Lemma \ref{Lem. eigenvalues of gauss sums matrices} we see that 
	$(-p+1)+n\sum_{y\in U_k}\zeta_p^{by}=n\theta_k^{(b)}$ (where $b\in\mathbb{F}_p^{\times}/U_k$) are exactly all the eigenvalues of $\widetilde{D_p}(k)$. Recall that $D_p(k)$ is defined by Lemma \ref{Lem. transformation lemma}. Then it is easy to see that $D_p(k)$ is an almost circulant matrix of $\v$. Now by Theorem \ref{Thm. det of an almost circulant matrix} and (\ref{Eq. Pk(t) in the proof of Thm. 2}) one can verify that 
	$$\det D_p(k)=\frac{1}{n}\sum_{b\in\mathbb{F}_p^{\times}/U_k}\prod_{c\in\mathbb{F}_p^{\times}/U_k\setminus\{b\}}n\theta_k^{(c)}=(-1)^{n-1}n^{n-2}a_p(k).$$
	This, together with Lemma \ref{Lem. transformation lemma}, implies that 
	\begin{equation}\label{Eq. result of (i) of Thm.2}
		\det B_p(k)=(-1)^{\frac{(n-1)(p+n-3)}{2}}p^{-(n-1)}\det D_p(k)=(-1)^{\frac{(n-1)(p+n-1)}{2}}p^{-(n-1)}n^{n-2}a_p(k).
	\end{equation}
	This completes the proof of (i).
	
	(ii) For $k=1$, we have $\theta_1=\zeta_p-1$. Let $\Phi_p(t)$ be the $p$-th cyclotomic polynomial. Then the minimal polynomial $P_1(t)$ of $\theta_1$ is equal to 
	$$\Phi_p(t+1)=1+(1+t)+(1+t)^2+\cdots+(1+t)^{p-1},$$
	and hence 
	$$a_p(1)=\frac{p(p-1)}{2}.$$
	Now by (\ref{Eq. result of (i) of Thm.2}) we obtain 
	$$\det B_p(1)=\frac{1}{2}(p-1)^{p-2}p^{-(p-3)}.$$
	This completes the proof of (ii).
	
	(iii) For $k=2$, we have $\theta_2=\zeta_p+\zeta_p^{-1}-2$. By \cite{BA} we know that the minimal polynomial of $\zeta_p+\zeta_p^{-1}$ is equal to 
	$$\Psi_p(t)=\sum_{r=0}^{(p-1)/2}\binom{p}{2r+1}\left(\frac{t}{4}+\frac{1}{2}\right)^{\frac{p-1}{2}-r}\left(\frac{t}{4}-\frac{1}{2}\right)^r.$$
	Thus, the minimal polynomial $P_2(t)$ of $\theta_2$ is 
	$$\Psi_p(t+2)=\sum_{r=0}^{(p-1)/2}\binom{p}{2r+1}\left(\frac{t}{4}+1\right)^{\frac{p-1}{2}-r}\left(\frac{t}{4}\right)^r.$$
	This implies 
	$$a_p(2)=\frac{p(p^2-1)}{24}.$$
	By (\ref{Eq. result of (i) of Thm.2}) we finally obtain 
	$$\det B_p(2)=\frac{(-1)^{\frac{p+1}{2}+\lfloor\frac{p-3}{4}\rfloor}}{24}\left(\frac{p-1}{2}\right)^{\frac{p-5}{2}}(p^2-1)p^{-(p-5)/2}.$$
	
	In view of the above, we have completed the proof of Theorem \ref{Thm. global properties of Bp(k)}.\qed 
	
	\medskip
	{\noindent\bf Declaration of competing interest} The authors declare that they have no conflict of interest.
	\medskip
	
	{\noindent\bf Data availability} No data was used for the research described in the article.
	\medskip

	\Ack\ The authors would like to thank two referees for their helpful comments. The authors also thank Professor Hao Pan for his encouragement.  
	
	We dedicate this paper to our advisor Professor Zhi-Wei Sun on the occasion of his 60th birthday.

	This research was supported by the Natural Science Foundation of China (Grant Nos. 12101321 and 12201291). The first author was also supported by Natural Science Foundation of Nanjing University of Posts and Telecommunications (Grant No. NY224107).

\end{document}